\documentclass[12pt, reqno]{amsart}
\usepackage{amsmath,amsthm,amscd,amsfonts,amssymb,graphicx,subfigure,color}
\usepackage[bookmarksnumbered,colorlinks,plainpages]{hyperref}
\usepackage{ragged2e}
\usepackage[pagewise]{lineno}
\setcounter{MaxMatrixCols}{10}
\hypersetup{colorlinks=true,linkcolor=black, anchorcolor=black,
	citecolor=black, urlcolor=black, filecolor=magenta, pdftoolbar=true}
\textheight 22.5truecm \textwidth 14.5truecm
\setlength{\oddsidemargin}{0.35in}
\setlength{\evensidemargin}{0.35in}
\setlength{\topmargin}{-.5cm}
\newtheorem{theorem}{Theorem}
\newtheorem{lemma}{Lemma}

\newtheorem{corollary}{Corollary}
\theoremstyle{definition}
\newtheorem{definition}{Definition}
\newtheorem{example}{Example}

\theoremstyle{remark}
\newtheorem{remark}{Remark}
\numberwithin{equation}{section}

\begin{document}
 
\begin{center}
{\textbf{\Large Quasi Strongly $E$-preinvexity and its Relationships with Nonlinear Programming}}\\
\bigskip

{\textbf {Akhlad Iqbal and Askar Hussain}}\\ 					
Department of  Mathematics, 
\\ Aligarh Muslim University, Aligarh-202002, India\\
Email: askarhussain59@gmail.com and akhlad6star@gmail.com

\end{center}

\bigskip
\noindent 
\textbf{Abstract}:  In this paper, we extend the class of strongly $E$-preinvex and strongly  $E$-invex   functions to quasi strongly $E$-preinvex, quasi strongly $E$-invex  and pseudo strongly $E$-invex functions. Some nontrivial suitable examples have been constructed in support of our definitions. Several interesting properties and relationships of these functions are discussed. Furthermore, to show the application of our results, we consider a nonlinear programming problem and show that the local minimum point is also a strictly global minimum.\\

\noindent
{\bf{Mathematics Subject Classification.}} 26B25, 26D07, 90C25\\

 \noindent
{{\bf Keywords}} {: Strongly  $E$-invex sets, Quasi strongly  $E$-preinvex functions, Quasi strongly $E$-invex functions, Pseudo strongly  $E$-invex functions, Non-linear programming problem $(NLPP)$}
\section{Introduction} 
Convexity is an important branch of mathematics, which has many applications in pure and applied mathematics such as optimization theory,  engineering science and economics. Because of its vast applications, Youness \cite{Youness1} introduced  $E$-convexity and later discussed the concept of semi strongly  $E$-convex functions, see \cite{Youness3, Youness4}. Fulga et al. \cite{Fulga} generalized this concept and defined  $E$-invexity and  $E$-preinvexity. Later, Jaiswal et al. \cite{Jaiswal} introduced generalized $E$-convex function and differentiable $E$-invexity.  They \cite{Jaiswal} discussed several interesting properties and derived the existence of optimal solution from the set of feasible solutions for a $NLPP$. Furthermore, the extension of strongly $E$-convexity to geodesic strongly $E$-convexity from linear space to Riemannian manifolds was developed by Kilickman et al. \cite{Adem1}.  They \cite{Adem1} have attempted  to preserve several interesting properties and results of linear space for the Riemannian manifolds. Afterwards, Iqbal et al. \cite{Iqbal1} defined geodesic $E$-prequasi-invex functions and discussed its characteristics with $NLPP$.   Hussain et al. \cite{Hussain1} introduced quasi strongly  $E$-convex functions and later, Iqbal et al. \cite{Hussain2} extended the concept of strongly  $E$-convexity to strongly  $E$-invexity and strongly  $E$-preinvexity.\\

Motivated by research works on convexity \cite{ Abou, Bazaraa, Ben,Adem2,Jeyakumar,Pini,Mond}, we introduce a new class of generalized strongly  $E$-preinvex   functions, named as quasi strongly  $E$-preinvex   function. The paper is divided as follows$\colon$ In Section \ref{Sec:Pre}, we recollect some basic definitions and results. Section \ref{Sec:results} is devoted to deriving several interesting properties and to discuss our main results. We also construct several examples in this section. In Section \ref{Sec:Non-linear programming problem}, we consider the non-linear programming problem for the quasi strongly $E$-preinvex functions and show that the local minimum point is a strictly minimum point.  The conclusion and motivation of this paper is explained in the last section.

\section{Preliminaries}\label{Sec:Pre}

Let $ {R}^{n}$  represent Euclidean space of dimension $n$. The  maps $E\colon {R}^{n}\rightarrow  {R}^{n}$ and $\Psi\colon R^{n}\times R^{n}\to R^{n}$ have been considered throughout the paper. Fulga et al. \cite{Fulga} defined  $E$-preinvexity as follows$\colon$

\begin{definition} \cite{Fulga}\label{dfn1}
	A set $S\subseteq {R}^{n}$ is called  $E$-invex with respect to (w.r.t.) $\Psi$, if for all $s,t\in S$ and $\lambda\in[0,1]$, we have  
\begin{eqnarray*}
	Et+\lambda\Psi(Es,Et)\in S.
\end{eqnarray*}
\end{definition}
\begin{definition}\cite{Fulga}
	Let $S\subseteq  {R}^{n}$ be an $E$-invex set. A function $h\colon S\rightarrow  {R}$ is  called  $E$-preinvex w.r.t. $\Psi$ on $S$, if for all $s,t\in S$ and $\lambda\in[0,1]$, we have  
\begin{eqnarray*}
	 h(Et+\lambda\Psi(Es,Et))\leq \lambda h(Es)+(1-\lambda)h(Et).
\end{eqnarray*}
\end{definition}

\begin{definition}\cite{Fulga}
	Let $S\subseteq  {R}^{n}$ be an  $E$-invex set. A function $h\colon S\rightarrow  {R}$ is  called  $E$-prequasi-invex w.r.t. $\Psi$  on $S$, if for all $s,t\in S$ and $\lambda\in[0,1]$, we have   
\begin{eqnarray*}
	 h(Et+\lambda\Psi(Es,Et))\leq \max\{h(Es),h(Et)\}.
\end{eqnarray*}
\end{definition}

Iqbal et al. \cite{Hussain2} introduced strongly  $E$-invexity and discussed some results as follows$\colon$

\begin{definition} \cite{Hussain2}\label{definition}
	A set $S\subseteq  {R}^{n}$ is  called strongly  $E$-invex (SEI) w.r.t. $\Psi\colon {R}^{n}\times  {R}^{n}\rightarrow  {R}^{n}$, if  for all $s,t\in S$, $\lambda\in[0,1]$ and $\alpha\in[0,1]$, we have    
\begin{eqnarray*}
	\alpha t+Et+\lambda\Psi(\alpha s+Es,\alpha t+Et)\in S.
\end{eqnarray*}
\end{definition}

\begin{remark}
	If ${\alpha}=0$, then ${S}$ reduces to ${E}$-invex set, which is the Definition \ref{dfn1}.	If $\Psi(\alpha s+Es, \alpha t+Et)=(\alpha s+Es-\alpha t-Et)$, $\forall s,t\in  {R}^{n}$, then we regain the  strongly  $E$-convexity defined by
	Youness \cite{Youness3}.
\end{remark}
\begin{lemma}\cite{Hussain2}\label{lem:1} 
	Let ${S}\subseteq  { {R}^{n}}$ be a SEI set  w.r.t. $ {\Psi}$, then $ {E}( {S})\subseteq {S}$.
\end{lemma}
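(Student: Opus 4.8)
The plan is to extract the conclusion directly from the defining inclusion of an SEI set by specializing the free parameters. Fix an arbitrary point $t\in S$. Since $S$ is a SEI set w.r.t. $\Psi$, the inclusion
\begin{eqnarray*}
\alpha t+Et+\lambda\Psi(\alpha s+Es,\alpha t+Et)\in S
\end{eqnarray*}
holds for every $s,t\in S$ and every $\lambda,\alpha\in[0,1]$. In particular I may take $s=t$, and then choose $\lambda=0$ and $\alpha=0$; all of these are admissible choices since $0\in[0,1]$.

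With $\lambda=0$ the term $\lambda\Psi(\alpha s+Es,\alpha t+Et)$ vanishes (independently of what $\Psi$ is, and without needing any regularity of $\Psi$), and with $\alpha=0$ the term $\alpha t$ vanishes as well. Hence the left-hand side collapses to $Et$, and the defining property yields $Et\in S$. Because $t\in S$ was arbitrary, this shows $E(S)\subseteq S$, which is the assertion.

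There is essentially no obstacle here: the only thing to be careful about is that the specialization $s=t$ and $\lambda=\alpha=0$ is permitted by the hypotheses, i.e. that the definition of an SEI set is required to hold for all $s,t\in S$ (including $s=t$) and for the endpoints $0$ of the parameter intervals. Since Definition \ref{definition} quantifies over $s,t\in S$ and over $\lambda,\alpha\in[0,1]$ with no exclusions, this is immediate, and the proof is complete. One could alternatively keep $\alpha$ arbitrary and only set $\lambda=0$ to obtain $\alpha t+Et\in S$ for all $\alpha\in[0,1]$, a slightly stronger intermediate fact, but setting $\alpha=0$ is all that is needed for the stated lemma.
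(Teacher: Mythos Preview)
Your argument is correct and is exactly the natural one: specialize the SEI inclusion at $\lambda=0$ and $\alpha=0$ to get $Et\in S$ for every $t\in S$. The present paper does not actually supply a proof of this lemma---it merely quotes the result from \cite{Hussain2}---so there is nothing further to compare; your specialization is the standard way this fact is established.
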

\begin{lemma}\cite{Hussain2}
	Let  $\{ {S}_{i}\}_{i\in I}$ be a the family of SEI sets w.r.t. $ {\Psi}$, and $ {S}_{i}\subseteq { {R}^{n}},$ for every $i\in I$, then the intersection $\bigcap\limits_{i\in I} {S}_{i}$ is strongly $ {E}$-invex set  w.r.t. $ {\Psi}$. 
\end{lemma}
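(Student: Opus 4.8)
The plan is to argue directly from Definition \ref{definition}, exploiting that the defining condition of a strongly $E$-invex set is a pointwise membership condition required to hold for every pair of points of the set. Write $S=\bigcap_{i\in I}S_i$. First I would fix arbitrary $s,t\in S$ and arbitrary scalars $\lambda\in[0,1]$, $\alpha\in[0,1]$, and form the point $z=\alpha t+Et+\lambda\Psi(\alpha s+Es,\alpha t+Et)$; membership $z\in S$ is exactly what must be established.

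Next, since $s,t\in S$, by definition of intersection $s,t\in S_i$ for every $i\in I$. Fixing any such index $i$, the hypothesis that $S_i$ is SEI w.r.t. $\Psi$ applies to the pair $s,t\in S_i$ with the same scalars $\lambda,\alpha$ and gives $z\in S_i$. As $i\in I$ was arbitrary, $z\in S_i$ for all $i\in I$, hence $z\in\bigcap_{i\in I}S_i=S$. Since $s,t,\lambda,\alpha$ were arbitrary, $S$ satisfies Definition \ref{definition}, i.e. $S$ is strongly $E$-invex w.r.t. $\Psi$.

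There is essentially no technical obstacle: this is the standard fact that a property defined by a universally quantified closure condition is inherited by arbitrary intersections, so the ``proof'' is really just unwinding the definitions in the right order. The only points worth a brief remark are the degenerate cases: if $\bigcap_{i\in I}S_i=\emptyset$ the defining condition holds vacuously and the conclusion still stands, and one may note in passing that, via Lemma \ref{lem:1}, the intersection then also satisfies $E(S)\subseteq S$, which is automatic once strong $E$-invexity is known.
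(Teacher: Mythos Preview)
Your argument is correct and is exactly the standard one: strong $E$-invexity is a universally quantified closure condition, so it passes to arbitrary intersections by unwinding the definition, precisely as you do. Note, however, that in this paper the lemma is quoted from \cite{Hussain2} as a preliminary and carries no proof here, so there is nothing in the present paper to compare your approach against; your write-up would serve perfectly well as the omitted proof.
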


\begin{definition} \cite{Hussain2}
	Let $S\subseteq  {R}^{n}$ be a SEI set. A function $h\colon S\rightarrow  {R}$ is  called strongly $E$-preinvex (SEP) w.r.t. $\Psi$  on $S$, if 
\begin{eqnarray*}
	h(\alpha t+Et+\lambda\Psi(\alpha s+Es,\alpha t+Et))\leq \lambda h(Es)+(1-\lambda)h(Et),
\end{eqnarray*}
$\forall s,t\in S,\alpha\in[0,1]~~\&~~\lambda\in[0,1]$.
\end{definition}

\begin{definition}\cite{Hussain2}\label{5P} 
	Let ${S}\subseteq  {R}^{n}$ be an open SEI set and  ${h}\colon {S}\rightarrow {R}$ be a differentiable function on ${S}$. Then, ${h}$ is  called  SEI w.r.t. $ {\Psi}$ on $ {S}$, if
	\begin{eqnarray*}
		\nabla  {h}( {E}t)\ {\Psi}( {\alpha} s+ {E}s, {\alpha} t+ {E}t)^{T}\leq  {h}( {E}s)- {h}( {E}t),
	\end{eqnarray*}
  $\forall s,t\in  {S}$ and $ {\alpha}\in {[0,1]} $.
\end{definition} 
 
Iqbal et al. \cite{Hussain2} introduced the following $\mathbf{Condition~A}$ to show that a differentiable SEI function with this condition is SEP function.
 
\noindent
  For an onto map  ${E}:{S}\rightarrow  {S}$, let $ {M}\subseteq  {R}^{n}$ be a SEI set  w.r.t. $ {\Psi}$, assume for each $s,t\in {S},~ {\alpha}\in {[0,1]},{\lambda}\in {[0,1]},~{\exists}~\bar{v}\in {S}$ s.t.  
$${E}\bar{v}= {\alpha} t+{E}t+ {\lambda} {\Psi}\left({\alpha} s+ {E}s, {\alpha} t+{E}t\right)\in{S}.$$
Then, ${\Psi}$ satisfies $\mathbf{Condition~A}$ if:

\begin{equation*}
	{\bf A_{1}:~~}~~ {\Psi}\big({\alpha} t+ {E}t, {\alpha} \bar{t}+ {E}\bar{t}\big)=- {\lambda}\big( {\alpha}\bar{t}+ {\Psi}({\alpha} s+ {E}s, {\alpha} t+{E}t)\big), 
\end{equation*}
\begin{equation*}
	\hspace{.8cm} {\bf A_{2}:~~}~~{\Psi}\left({\alpha}s+ {E}s,{\alpha} \bar{t}+{E}\bar{t}\right)=(1-{\lambda})\left({\alpha}\bar{t}+{\Psi}({\alpha}s+{E}s,{\alpha}t+{E}t)\right). 
\end{equation*} 
\noindent
 For ${\alpha}=0$ and ${E}s=s~\forall s\in S$, the $\mathbf{Condition~A}$ reduces to the $\mathbf{Condition~C}$ defined by Mohan et al. \cite{Mohan}.

\begin{theorem}\cite{Hussain2} \label{thm:1}
	Let ${S}\subseteq  {R}^{n}$ be an open SEI set  w.r.t. ${\Psi}$ and an onto map $ {E}\colon {S}\rightarrow  {S}$. If the function $ {h}\colon{S}\rightarrow  {R}$ is differentiable  SEI w.r.t. ${\Psi}$ on ${S}$ and $ {\Psi}$ satisfies Condition $ {A}$. Then, $ {h}$ is   SEP w.r.t. $ {\Psi}$ on $ {S}$.
\end{theorem}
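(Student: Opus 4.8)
The plan is to mimic the classical Mohan--Neogy argument that a differentiable (pre)invex function satisfying Condition $C$ is preinvex, carried over verbatim to the strongly $E$-invex framework, with Condition $A$ playing the role of Condition $C$. Fix $s,t\in S$, $\alpha\in[0,1]$ and $\lambda\in[0,1]$, and abbreviate $w:=\Psi(\alpha s+Es,\alpha t+Et)$ and $z:=\alpha t+Et+\lambda w$, so that the conclusion we must reach is precisely $h(z)\le\lambda h(Es)+(1-\lambda)h(Et)$. By \textbf{Condition A} there exists a point $\bar v\in S$ with $E\bar v=z\in S$; this is the point at which the differentiable $SEI$ inequality of Definition \ref{5P} will be invoked (note $\bar v\in S$ is exactly what legitimises using $\bar v$ in the second slot, and $E\bar v=z\in S$, consistent with Lemma \ref{lem:1}).

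Next I would apply Definition \ref{5P} twice at $\bar v$: once with the pair $(s,\bar v)$ and once with the pair $(t,\bar v)$, obtaining
\[
\nabla h(E\bar v)\,\Psi(\alpha s+Es,\alpha\bar v+E\bar v)^{T}\le h(Es)-h(E\bar v),
\]
\[
\nabla h(E\bar v)\,\Psi(\alpha t+Et,\alpha\bar v+E\bar v)^{T}\le h(Et)-h(E\bar v).
\]
Now I would substitute using \textbf{Condition A}: relation $\mathbf{A_2}$ gives $\Psi(\alpha s+Es,\alpha\bar v+E\bar v)=(1-\lambda)(\alpha\bar v+w)$, and relation $\mathbf{A_1}$ gives $\Psi(\alpha t+Et,\alpha\bar v+E\bar v)=-\lambda(\alpha\bar v+w)$; both are scalar multiples of the \emph{same} vector $\alpha\bar v+w$. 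Multiplying the first inequality by $\lambda\ge0$ and the second by $1-\lambda\ge0$ and adding them, the two terms $\pm\lambda(1-\lambda)\,\nabla h(E\bar v)(\alpha\bar v+w)^{T}$ cancel, leaving $0\le\lambda h(Es)+(1-\lambda)h(Et)-h(E\bar v)$. Since $E\bar v=z=\alpha t+Et+\lambda\Psi(\alpha s+Es,\alpha t+Et)$, this is exactly the defining inequality of an $SEP$ function, so $h$ is $SEP$ w.r.t. $\Psi$ on $S$, as claimed.

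I do not expect a genuine analytic obstacle here; the whole argument is forced once the right substitutions are made. The only place needing care is the bookkeeping of Condition $A$: one must verify that the ``common vector'' $\alpha\bar v+w$ coming out of $\mathbf{A_1}$ and $\mathbf{A_2}$ is literally identical in the two relations (so that it really cancels after forming the convex combination), and that $\nabla h(E\bar v)$ is meaningful, which holds because $S$ is open and $h$ is differentiable on $S$. The endpoint cases $\lambda\in\{0,1\}$ need no separate discussion: multiplying by $\lambda$ or by $1-\lambda$ renders one of the two inequalities vacuous while the other already yields $h(z)\le\lambda h(Es)+(1-\lambda)h(Et)$, using only that $z=E\bar v$ for some $\bar v\in S$. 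Thus the proof reduces to these elementary algebraic manipulations built on Definition \ref{5P} and \textbf{Condition A}.
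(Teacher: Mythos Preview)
This theorem is quoted from \cite{Hussain2} in the preliminaries and the present paper does not supply a proof of its own, so there is nothing to compare against here. Your argument is correct and is precisely the expected adaptation of the Mohan--Neogy Condition~$C$ proof to the strongly $E$-invex setting: invoke the surjectivity clause of Condition~$A$ to produce $\bar v\in S$ with $E\bar v=z$, apply the SEI inequality of Definition~\ref{5P} at $\bar v$ with the pairs $(s,\bar v)$ and $(t,\bar v)$, use $\mathbf{A_1}$ and $\mathbf{A_2}$ to rewrite both $\Psi$-vectors as scalar multiples of the common vector $\alpha\bar v+w$, and form the convex combination with weights $\lambda,1-\lambda$ so that the gradient terms cancel; the endpoint cases are indeed absorbed by the same computation.
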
 

\section{{Mains Results}}\label{Sec:results}

In this section, we introduce the quasi strongly $E$-preinvex (QSEP) function and discuss several interesting properties of this function.  
\begin{definition}
	Let $S\subseteq  {R}^{n}$ be a nonempty SEI set. A  function $h\colon {R}^{n}\rightarrow {R}$ is   called QSEP w.r.t. $\Psi$ on   $S$, if 
	\begin{eqnarray*}
		h(\alpha t+Et+\lambda\Psi(\alpha s+Es,\alpha t+Et))\leq \max\{h(Es),h(Et)\},
	\end{eqnarray*}
	$\forall s,t\in S,\alpha\in[0,1]~~\&~~\lambda \in[0,1]$.\\
	
	\noindent  
	The function $h$ is  called a strictly QSEP function, if the inequality is strict and $h(Es)\neq h(Et)$, $\forall s,t\in S, \alpha\in[0,1]~~\&~~\lambda\in(0,1)$.
\end{definition}
\begin{remark}
	QSEP function w.r.t.  $\Psi$ is an $E$-prequasi-invex defined by Fulga \cite{Fulga}, if $\alpha=0$.
\end{remark}
\begin{example}
	Suppose $h\colon {R}\rightarrow  {R}$ be a defined as$\colon$
	\begin{equation*}
		h(s)=
		\begin{cases}
			1 & \text{if } s>0, \\
			-s & \text{if } s\leq0. 
		\end{cases}
	\end{equation*} 
	
	\noindent
	and $E\colon {R}\rightarrow  {R}$ be a map defined as 	$Es=|s|$ and $\Psi\colon {R}\times  {R}\rightarrow  {R}$ be defined as$\colon$
	\begin{equation*}
		\Psi(s,t)=
		\begin{cases}
			-t & \text{if } s\neq t, \\
			0 & \text{if } s=t. 
		\end{cases}\hspace{.5cm}
	\end{equation*}
To show this $\forall s,t\in R, \lambda\in[0,1]$ and $\alpha\in[0,1]$, three possible cases are given:

\noindent
$\mathbf{Case (1)\colon}$ $s\neq t>0$ or $s\neq t<0$ we have
\begin{eqnarray*}
	h(\alpha t+Et-\lambda (\alpha t+ Et))&\leq&\max\{h(Es),h(Et)\}\\
	h((1-\lambda)(1+\alpha)t)&\leq&\max\{h(s),h(t)\}\\ ~~or~~
	h((1-\lambda)(\alpha t+ Et))&\leq&\max\{h(Es),h(Et)\}\\
	h((1-\lambda)(1-\alpha)t)&\leq&\max\{h(-s),h(-t)\}.
\end{eqnarray*}	
	$\mathbf{Case (2)\colon}$ $s=t>0$ or $s=t<0$ we have
	\begin{eqnarray*}
		\hspace{2.4cm}h(\alpha t+Et)&\leq&\max\{h(Et),h(Et)\}\\
			h((1+\alpha)t)&\leq&\max \{h(t),h(t)\}\\
				h((1+\alpha)t)&\leq& h(t)\\
			~or~h(\alpha t+Et)&\leq&\max\{h(Et),h(Et)\}\\
			h((1-\alpha)t)&\leq& \max\{h(-t),h(-t)\}\\
			h((1-\alpha)t)&\leq& h(-t).
	\end{eqnarray*}
	$\mathbf{Case (3)\colon}$ $s>0, t=0$,or $s<0,t=0$, we have 
	\begin{eqnarray*}
		h(0)&\leq&\max\{h(Es),h(E0)\}\\
		0&\leq&\max\{h(s),h(0)\}\\ 
			0&\leq&\max\{1,0\}.
	\end{eqnarray*} 
Therefore, in all cases, we have
\begin{eqnarray*}
	\begin{split}
		h(\alpha t+Et+\lambda\Psi(\alpha s+Es,\alpha t+Et))\leq \max\{h(Es),h(Et)\}. 
	\end{split}
\end{eqnarray*} 
	Hence, the function $h$ is QSEP as well as $E$-prequasi-invex  w.r.t. $\Psi$ but it is not strongly $E$-preinvex. In particular, at points $s=0,t=1,\alpha=\frac{1}{2}$ and $\lambda=\frac{1}{2}$, we get
\begin{eqnarray*}
	h\left(\alpha 1+E1+\lambda \Psi(\alpha 0+E0,\alpha 1+E1)\right)&=&h\left(\frac{3}{2}+\frac{1}{2} \Psi\left(0,\frac{3}{2}\right)\right)\\&=&h\left(\frac{3}{4}\right)\\&=&1.
\end{eqnarray*}
	However,
\begin{eqnarray*}
	 \hspace{1.5cm}\lambda h(E0)+(1-\lambda)h(E1)&=&\frac{1}{2} h(0)+\frac{1}{2}h(1)\\&=&\frac{1}{2},
\end{eqnarray*} 
which implies 
\begin{eqnarray*}
	h(\alpha t+Et+\lambda\Psi(\alpha s+Es,\alpha t+Et))\nleq \lambda h(Es)+(1-\lambda)h(Et).
\end{eqnarray*}
	Hence, $h$ is not SEP function.\\

	An $E$-prequasi-invex function w.r.t. $\Psi$  need not be necessarily  QSEP function as shown in the following Example$\colon$
\end{example}

\begin{example}
	
	Suppose $h\colon {R}\rightarrow  {R}$ be a defined as 
	\begin{equation*}
		\hspace{.3cm} h(s)=
		\begin{cases}
			1 & \text{if } s>0 \\
			-s & \text{if } s\leq0, 
		\end{cases}
	\end{equation*}
$E\colon {R}\rightarrow {R}$ is defined as $Es=-s^{2}~\forall s\in{R}$,  
	and $\Psi\colon R\times R\to R$ is defined as
	\begin{equation*}
		\Psi(s,t)=
		\begin{cases}
			-t & \text{if } s\neq t, \\
			0 & \text{if } s=t. 
		\end{cases}.
	\end{equation*}
 To show this $\forall s,t\in R, \lambda\in[0,1]$ and $\alpha\in[0,1]$, we have three possible cases are given:
 
\noindent
	$\mathbf{Case (1)\colon}$ $s\neq t>0$ or $s\neq t<0$ we have
	\begin{eqnarray*}
		 \hspace{.2cm}h(Et-\lambda Et)&\leq&\max\{h(Es),h(Et)\}\\
		  h((1-\lambda) Et)&\leq&\max\{h(-s^{2}),h(-t^{2})\}\\
		  (1-\lambda)t^{2}&\leq& t^{2}.
	\end{eqnarray*}	  
$\mathbf{Case (2)\colon}$ $s=t>0$ or $s=t<0$ we have
\begin{eqnarray*}
\hspace{2cm} h(Et)&\leq&\max\{h(Et),h(Et)\}\\
	h(-t^{2})&\leq&\max\{h(-t^{2}),h(-t^{2})\}\\
	 t^{2}&\leq& t^{2}.
\end{eqnarray*}
$\mathbf{Case (3)\colon}$ $s>0, t=0$ or $s<0,t=0$, we have 
\begin{eqnarray*}
	h(0)&\leq&\max\{h(Es),h(E0)\}\\
	h(0)&\leq&\max\{h(-s^{2}),h(0)\}\\
	0&\leq& s^{2}.
\end{eqnarray*}
 
Therefore, in all cases, we have
\begin{eqnarray*}
	h(Et+\lambda\Psi(Es, Et))\leq\max\{h(Es),h(Et)\}, \forall s,t\in R, \lambda\in[0,1].
\end{eqnarray*} 
	Hence, the function $h$ is $E$-prequasi-invex  but it is not QSEP function. At points $s=0,t=-1,\alpha=1$ and $\lambda=0$, we get
\begin{eqnarray*}
	h(\alpha (-1)+E(-1)+\lambda \Psi(\alpha (0)+E(0),\alpha (-1)+E(-1))&=&h(-1-1)\\&=&h(-2)\\&=&2.
\end{eqnarray*}
	However,
	\begin{eqnarray*}
		\hspace{6.4cm}\max\{h(0),h(-1)\}&=&\max\{0,1\}\\&=&1,
	\end{eqnarray*}which implies
\begin{eqnarray*}
	h(\alpha t+Et+\lambda\Psi(\alpha s+Es,\alpha t+Et))\nleq \max\{h(Es),h(Et)\}.
\end{eqnarray*}	 
\end{example}

\begin{theorem}\label{thm:2}
	Let $S\subseteq  {R}^{n}$ be a SEI set. If $h\colon {R}^{n}\rightarrow  {R}$ is QSEP w.r.t. $\Psi$ on   $S$, then $ h(\alpha t+Et)\leq h(Et)$, for all $t\in S$ and $\alpha \in[0,1]$.  
\end{theorem}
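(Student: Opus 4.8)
The plan is to obtain the inequality as an immediate specialization of the defining inequality of a QSEP function, choosing the free parameters $s$ and $\lambda$ conveniently. Fix $t\in S$ and $\alpha\in[0,1]$. Since $h$ is QSEP w.r.t. $\Psi$ on $S$, the inequality
\[
h(\alpha t+Et+\lambda\Psi(\alpha s+Es,\alpha t+Et))\leq \max\{h(Es),h(Et)\}
\]
holds for every $s\in S$ and every $\lambda\in[0,1]$. I would take $s=t$ (admissible because $t\in S$) and $\lambda=0$ (admissible because $0\in[0,1]$). With $\lambda=0$, the term $\lambda\Psi(\alpha t+Et,\alpha t+Et)$ vanishes regardless of the value of $\Psi$ at that argument, so the left-hand side reduces to $h(\alpha t+Et)$, while the right-hand side reduces to $\max\{h(Et),h(Et)\}=h(Et)$. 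This gives $h(\alpha t+Et)\leq h(Et)$ for all $t\in S$ and all $\alpha\in[0,1]$, which is precisely the assertion.

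There is essentially no obstacle: the only points requiring attention are that the substituted values $s=t$ and $\lambda=0$ lie in the prescribed ranges (they do) and that $h$ is defined on all of $\mathbb{R}^n$, so no membership condition on $\alpha t+Et$ is needed for $h(\alpha t+Et)$ to make sense. I would simply remark that this is the QSEP counterpart of the analogous reduction identities known for SEP functions, and that it will be invoked later (for instance in Section \ref{Sec:Non-linear programming problem}) when comparing the value of $h$ at $\alpha t+Et$ with its value at $Et$.
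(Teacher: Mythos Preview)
Your proof is correct and follows exactly the same approach as the paper: specialize the QSEP inequality at $s=t$ and $\lambda=0$ to collapse the left side to $h(\alpha t+Et)$ and the right side to $h(Et)$.
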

\begin{proof}
	Since the function $h$ is QSEP  on a SEI set $S$,   $\forall s,t\in S,\alpha \in[0,1]~~\&~~ \lambda\in[0,1]$, we get
	\begin{equation*}
		\alpha t+Et+\lambda\Psi(\alpha s+Es,\alpha t+Et)\in S
	\end{equation*} and
	\begin{equation*}
		h(\alpha t+Et+\lambda\Psi(\alpha s+Es,\alpha t+Et))\leq \max\{h(Es),h(Et)\}.
	\end{equation*}
	Thus, for $s=t$ and take $\lambda=0$, we get $ h(\alpha t+Et)\leq h(Et), $ for all $t\in S$.   
\end{proof}
 
\begin{theorem} \label{thm:3}
	Suppose $S\subseteq  {R}^{n}$ be a SEI set. If the functions $h_{j}\colon {R}^{n}\rightarrow  {R}$, $1\leq j\leq n $, are  non negative  QSEP w.r.t. $\Psi$ on  $S$, then the linear combination 
	\begin{eqnarray*}
		h=\sum\limits_{j=1}^{n}a_{j} h_{j},
	\end{eqnarray*} 
for $a_{j}\geq0,~~~ 1\leq j\leq n$,  is QSEP function on $S$.  
\end{theorem}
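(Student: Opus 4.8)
The plan is to push the QSEP inequality through each summand and then recombine, using the nonnegativity of the $h_j$ to keep the bound under control. First I would fix arbitrary $s,t\in S$, $\alpha\in[0,1]$ and $\lambda\in[0,1]$, and abbreviate $z=\alpha t+Et+\lambda\Psi(\alpha s+Es,\alpha t+Et)$. Since $S$ is a SEI set with respect to $\Psi$, the point $z$ lies in $S$, so each $h_j$ is evaluated at an admissible point and the definition of QSEP applies, giving $h_j(z)\le\max\{h_j(Es),h_j(Et)\}$ for every $j=1,\dots,n$.

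Next I would multiply the $j$th inequality by $a_j\ge 0$ and add over $j$, which yields
$$h(z)=\sum_{j=1}^{n}a_jh_j(z)\ \le\ \sum_{j=1}^{n}a_j\max\{h_j(Es),h_j(Et)\}.$$
It then remains to dominate the right-hand side by $\max\{h(Es),h(Et)\}=\max\bigl\{\sum_j a_jh_j(Es),\ \sum_j a_jh_j(Et)\bigr\}$. The clean sub-case is when the endpoint ordering is the same for all indices — say $h_j(Es)\le h_j(Et)$ for every $j$, or the reverse — because then the sum of maxima collapses to $\sum_j a_jh_j(Et)=h(Et)$ (respectively $h(Es)$), and, chaining with the display above, $h(z)\le\max\{h(Es),h(Et)\}$ follows at once.

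The hard part will be the mixed situation, where for some indices $h_j(Es)$ is the larger value and for others $h_j(Et)$ is. Splitting $\{1,\dots,n\}=A\cup B$ accordingly, one has $\sum_j a_j\max\{h_j(Es),h_j(Et)\}=\sum_{j\in A}a_jh_j(Es)+\sum_{j\in B}a_jh_j(Et)$, and since every omitted term $a_jh_j(Es)$, $a_jh_j(Et)$ is nonnegative this is at most $h(Es)+h(Et)$ in an obvious way; squeezing it down to $\max\{h(Es),h(Et)\}$ is exactly the delicate point, and it is where I expect to spend the most effort, comparing $\sum_{j\in A}a_jh_j(Es)$ against $\sum_{j\in A}a_jh_j(Et)$ and $\sum_{j\in B}a_jh_j(Et)$ against $\sum_{j\in B}a_jh_j(Es)$ term by term and invoking nonnegativity. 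Once the estimate $\sum_j a_j\max\{h_j(Es),h_j(Et)\}\le\max\{h(Es),h(Et)\}$ is secured, combining it with the displayed inequality gives $h(z)\le\max\{h(Es),h(Et)\}$; as $s,t\in S$, $\alpha\in[0,1]$ and $\lambda\in[0,1]$ were arbitrary, this is precisely the statement that $h$ is QSEP with respect to $\Psi$ on $S$.
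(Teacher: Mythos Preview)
Your outline follows the paper's line of argument: apply the QSEP inequality to each $h_j$, multiply by $a_j\ge0$, sum, and then try to pass from $\sum_{j}a_j\max\{h_j(Es),h_j(Et)\}$ to $\max\bigl\{\sum_{j}a_jh_j(Es),\sum_{j}a_jh_j(Et)\bigr\}$. You are right to flag this last passage as the crux, but it is not merely ``delicate'' --- it is false in general. The inequality $\max\{p,q\}+\max\{p',q'\}\le\max\{p+p',\,q+q'\}$ can fail (take $p=q'=1$, $q=p'=0$), and nonnegativity of the summands does not rescue it; your proposed term-by-term comparison on the index sets $A,B$ cannot close the gap, because for $j\in A$ one has $h_j(Es)\ge h_j(Et)$, which pushes the estimate in the wrong direction. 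In fact the statement itself is false: take $S=\mathbb{R}$, $E=\mathrm{id}$, $\Psi(u,v)=u-v$, and define $h_1(x)=1$ for $x>0$, $h_1(x)=0$ otherwise, and $h_2(x)=1$ for $x<1$, $h_2(x)=0$ otherwise. Both are nonnegative and, being monotone, are easily checked to be QSEP for every $\alpha\in[0,1]$; yet $h=h_1+h_2$ satisfies $h(1/2)=2>1=\max\{h(-1),h(2)\}$ (choose $s=-1$, $t=2$, $\alpha=0$, $\lambda=\tfrac12$), so $h$ is not QSEP.

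For comparison, the paper's own proof writes $\sum_{j}a_jh_j(z)\le\max\bigl\{\sum_{j}a_jh_j(Es),\sum_{j}a_jh_j(Et)\bigr\}$ in a single unjustified line --- it silently assumes the very step you singled out as problematic. The gap is therefore shared, and it cannot be repaired without an extra hypothesis (for instance, the common ordering $h_j(Es)\le h_j(Et)$ for all $j$, which is precisely the ``clean sub-case'' you already handled).
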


\begin{proof}
	Since $h_{j}, ~~1\leq j\leq n$, are QSEP functions on SEI set $S$, then for every $s,t\in S$, $\alpha\in[0,1]$ and $\lambda\in[0,1]$, we get\\
	\begin{equation*}
		\alpha t+Et+\lambda\Psi(\alpha s+Es,\alpha t+Et)\in S
	\end{equation*} and
	
	\begin{eqnarray*}
		h\left(\alpha t+Et+\lambda\Psi(\alpha s+Es,\alpha t+Et)\right)&=&\sum\limits_{j=1}^{n}a_{i} h_{i}\left(\alpha t+Et+\lambda\Psi(\alpha s+Es,\alpha t+Et)\right)\\
		&\leq & \max\left\{\sum\limits_{j=1}^{n}a_{i} h_{j}(Es),  \sum\limits_{j=1}^{n}a_{j} h_{j}(Et)\right\}\\
		&=& \max\left\{h(Es),h(Et)\right\}.
	\end{eqnarray*}
	Hence, $h(s)$ is QSEP  on $S$.
\end{proof}
 
\begin{theorem}  \label{thm:4} 
	Let $S\subseteq {R}^{n}$ be a SEI and $\{h_{i}\}_{i\in I}$ be a collection of functions defined on $S$ s.t. $\sup\limits_{i\in I} h_{i}(s)$ exists in $ {R}$, $\forall s\in S$. Let $h\colon S\rightarrow R$ be a function defined by $h(s)=\sup\limits_{i\in I}h_{i}(s),\forall s\in S$. If the functions $h_{i}\colon S\rightarrow  {R}$ , for every $i\in I$, are all QSEP w.r.t. $\Psi$ on   $S$, then $h$ is QSEP on $S$.   
\end{theorem}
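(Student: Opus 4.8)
The plan is to argue pointwise in $i$ and then pass to the supremum, exactly as one does for quasiconvex functions. First I would fix arbitrary $s,t\in S$, $\alpha\in[0,1]$ and $\lambda\in[0,1]$, and set
\[
z \;=\; \alpha t+Et+\lambda\Psi(\alpha s+Es,\alpha t+Et).
\]
Since $S$ is a SEI set w.r.t. $\Psi$, the point $z$ lies in $S$, so $h(z)$, $h(Es)$ and $h(Et)$ are all well defined (the latter two exist in $R$ by hypothesis).

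Next, for each $i\in I$, I would invoke the QSEP inequality for $h_i$ at the same data $s,t,\alpha,\lambda$ to get
\[
h_i(z)\;\leq\;\max\{h_i(Es),h_i(Et)\}.
\]
Since $h_i(Es)\leq \sup_{j\in I}h_j(Es)=h(Es)$ and likewise $h_i(Et)\leq h(Et)$, it follows that $h_i(z)\leq \max\{h(Es),h(Et)\}$ for every $i\in I$. Thus $\max\{h(Es),h(Et)\}$ is an upper bound for the set $\{h_i(z):i\in I\}$.

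Finally I would take the supremum over $i\in I$ on the left-hand side: by the definition of least upper bound, $h(z)=\sup_{i\in I}h_i(z)\leq \max\{h(Es),h(Et)\}$, which is precisely the QSEP inequality for $h$ at $s,t,\alpha,\lambda$. Since $s,t,\alpha,\lambda$ were arbitrary, $h$ is QSEP w.r.t. $\Psi$ on $S$. There is no real obstacle here; the only point to be careful about is that the hypothesis guarantees $h(s)=\sup_{i}h_i(s)$ is finite for every $s\in S$, so the $\max$ on the right is a genuine real number and the supremum step is legitimate. One could also remark in passing that the analogous statement for the strict version would require an additional separation-type hypothesis and does not follow from this argument.
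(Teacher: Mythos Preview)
Your argument is correct and follows essentially the same route as the paper: apply the QSEP inequality for each $h_i$ at the common point $z=\alpha t+Et+\lambda\Psi(\alpha s+Es,\alpha t+Et)$, bound the right-hand side by $\max\{h(Es),h(Et)\}$, and pass to the supremum over $i$. If anything, your version is slightly tidier in that you first replace $\max\{h_i(Es),h_i(Et)\}$ by the $i$-independent bound $\max\{h(Es),h(Et)\}$ before taking the supremum, which makes the final step transparent.
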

\begin{proof}
	Since functions $h_{i}\colon S\rightarrow  {R}$, for every $i\in I$, are QSEP on   $S$, then $\forall s,t\in S$, $\alpha \in [0,1]~~\&~~\lambda \in[0,1]$, we get
	\begin{eqnarray*}
		h_{i}(\alpha t+Et+\lambda\Psi(\alpha s+Es,\alpha t+Et))&\leq& \max\left\{h_{i}(Es),h_{i}(Et)\right\},\\
		\sup\limits_{i\in I}h_{i}(\alpha t+Et+\lambda\Psi(\alpha s+Es,\alpha t+Et))&\leq& \max\left\{\sup\limits_{i\in I} h_{i}(Es),\sup\limits_{i\in I}h_{i}(Et)\right\},\\
		&=&\max\left\{h(Es),h(Et)\right\},\\
		h(\alpha t+Et+\lambda\Psi(\alpha s+Es,\alpha t+Et))&\leq& \max\left\{h(Es),h(Et)\right\}.
	\end{eqnarray*} 
	Therefore, $h$ is QSEP on $S$.
\end{proof}
\begin{theorem} \label{thm:5}
	Let $S\subseteq  {R}^{n}$ be a SEI set. Let the function $h\colon {R}^{n}\rightarrow  {R}$ be QSEP w.r.t. $\Psi$ on $S$. Let $g\colon {R}\rightarrow  {R}$ be a positively homogeneous non-decreasing function, then $g\circ h$ is QSEP on $S$.
\end{theorem}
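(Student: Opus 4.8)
The plan is to reduce the statement to the single defining inequality for $h$ and then transport the outer function $g$ across it using monotonicity. First I would fix arbitrary $s,t\in S$, $\alpha\in[0,1]$ and $\lambda\in[0,1]$, and set $z:=\alpha t+Et+\lambda\Psi(\alpha s+Es,\alpha t+Et)$. Since $S$ is SEI w.r.t.\ $\Psi$, the point $z$ lies in $S$, so $(g\circ h)(z)$ is well defined; and since $h$ is QSEP on $S$ we have $h(z)\le\max\{h(Es),h(Et)\}$.

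Next I would apply $g$ to both sides of this inequality. Because $g$ is non-decreasing, $h(z)\le\max\{h(Es),h(Et)\}$ gives $g(h(z))\le g\big(\max\{h(Es),h(Et)\}\big)$. The only nonroutine point is the elementary identity $g(\max\{a,b\})=\max\{g(a),g(b)\}$, which holds for any non-decreasing $g\colon R\to R$: the number $\max\{a,b\}$ equals $a$ or equals $b$, and in either case $g$ of it is one of $g(a),g(b)$ and dominates both. Combining the two displays yields
\[
(g\circ h)(z)\le\max\{(g\circ h)(Es),(g\circ h)(Et)\},
\]
which is precisely the QSEP inequality for $g\circ h$ at $s,t,\alpha,\lambda$. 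As these were arbitrary, $g\circ h$ is QSEP on $S$.

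I do not expect any genuine obstacle here. The one thing worth flagging in the write-up is that the hypothesis of positive homogeneity of $g$ is not actually used for this conclusion — monotonicity alone suffices — so I would either drop it or remark that it is retained only to match the hypotheses used elsewhere. If one additionally wants the \emph{strictly} QSEP property to be preserved, then one should assume $g$ strictly increasing, so that $h(Es)\ne h(Et)$ forces $(g\circ h)(Es)\ne(g\circ h)(Et)$ and strict inequalities are carried through; I would include this as a short companion remark rather than fold it into the main argument.
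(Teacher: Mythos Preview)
Your argument is correct and mirrors the paper's proof essentially line for line: apply the QSEP inequality for $h$, push $g$ through by monotonicity, and use $g(\max\{a,b\})=\max\{g(a),g(b)\}$ to rewrite the right-hand side. Your remark that positive homogeneity is never invoked is accurate --- the paper's own proof also uses only the non-decreasing hypothesis, despite citing both.
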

\begin{proof}
	Since the function $h$ is QSEP  on   $S$, $\forall s,t\in S$, $\alpha\in[0,1]~~\&~~\lambda\in[0,1]$, we get
\begin{eqnarray*}
	\alpha t+Et+\lambda\Psi(\alpha s+Es,\alpha t+Et)\in S,
\end{eqnarray*}
	and
\begin{eqnarray*}
	h(\alpha t+Et+\lambda\Psi(\alpha s+Es,\alpha t+Et))\leq \max\left\{h(Es),h(Et)\right\}.
\end{eqnarray*}
	Since $g$ is a  positively homogeneous non-decreasing, we get
	\begin{eqnarray*}
		\left(g\circ h\right)\left(\alpha t+Et+\lambda\Psi(\alpha s+Es,\alpha t+Et)\right)&\leq&g\circ\left(\max\left\{h(Es),h(Et)\right\}\right),\\
		&=& \max\left\{\left(g\circ h\right)(Es),\left(g\circ h\right)(Et)\right\}.
	\end{eqnarray*}
	Hence, the function $g\circ h$ is QSEP on $S$.
\end{proof}
 
In the following, we show a relation between SEP functions and QSEP functions.

\begin{theorem}\label{thm:6}
	Let  $S\subseteq  {R}^{n}$ be a SEI set. If a function $h\colon {R}^{n}\rightarrow  {R}$ is SEP w.r.t. $\Psi$ on   $S$ and $h(Es)\leq h(Et)$, $\forall s,t\in S$. Then, $h$ is QSEP on $S$. 
\end{theorem}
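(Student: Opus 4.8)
The plan is to argue directly from the defining inequality for SEP functions, using only the elementary fact that a convex combination of two real numbers is bounded above by their maximum. This is a short deduction with no real structural obstacle; the only "content" is unwinding the definitions in the right order.

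First I would fix arbitrary $s,t\in S$, $\alpha\in[0,1]$ and $\lambda\in[0,1]$. Since $S$ is SEI, the point $\alpha t+Et+\lambda\Psi(\alpha s+Es,\alpha t+Et)$ lies in $S$, so evaluating $h$ there makes sense. Because $h$ is SEP w.r.t. $\Psi$ on $S$, we have
\begin{eqnarray*}
	h\big(\alpha t+Et+\lambda\Psi(\alpha s+Es,\alpha t+Et)\big)\leq \lambda h(Es)+(1-\lambda)h(Et).
\end{eqnarray*}
Next I would bound the right-hand side: using the hypothesis $h(Es)\leq h(Et)$ (and $1-\lambda\geq 0$, $\lambda\geq 0$), one gets $\lambda h(Es)+(1-\lambda)h(Et)\leq \lambda h(Et)+(1-\lambda)h(Et)=h(Et)=\max\{h(Es),h(Et)\}$. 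Chaining the two inequalities yields
\begin{eqnarray*}
	h\big(\alpha t+Et+\lambda\Psi(\alpha s+Es,\alpha t+Et)\big)\leq \max\{h(Es),h(Et)\},
\end{eqnarray*}
which is exactly the QSEP inequality for the chosen $s,t,\alpha,\lambda$. Since these were arbitrary, $h$ is QSEP on $S$.

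The step I expect to be "the main obstacle" is essentially trivial here — it is just the observation that $\lambda a+(1-\lambda)b\leq\max\{a,b\}$ for $\lambda\in[0,1]$ — so in fact the hypothesis $h(Es)\leq h(Et)$ is stronger than needed, and one could note that any SEP function is automatically QSEP even without it. I would nonetheless present the proof in the form above to match the stated hypotheses, and, if desired, add a one-line remark that the ordering assumption can be dropped.
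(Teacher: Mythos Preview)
Your proof is correct and follows essentially the same route as the paper: apply the SEP inequality, then use $h(Es)\leq h(Et)$ to bound $\lambda h(Es)+(1-\lambda)h(Et)\leq h(Et)=\max\{h(Es),h(Et)\}$. Your additional remark that the ordering hypothesis is unnecessary (since $\lambda a+(1-\lambda)b\leq\max\{a,b\}$ always holds for $\lambda\in[0,1]$) is a valid observation that the paper does not make.
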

\begin{proof}
	Let $h$ be a SEP function on $S$, then $\forall s,t\in S,\alpha\in[0,1]~~\&~~\lambda\in[0,1]$, we get 
\begin{eqnarray*}
	h(\alpha t+Et+\lambda\Psi(\alpha s+Es,\alpha t+Et))\leq \lambda h(Es)+(1-\lambda)h(Et).
\end{eqnarray*}
	Since $h(Es)\leq h(Et)$, for each $s,t\in S$.
	Then, \begin{eqnarray*}
		h(\alpha t+Et+\lambda\Psi(\alpha s+Es,\alpha t+Et))\leq h(Et)=\max\{h(Es),h(Et)\}.
	\end{eqnarray*}
	Thus, $h$ is QSEP on $S$.  
\end{proof}
Motivated by Barani  \cite{Poury1} and Azagra \cite{Azagra},
in the following theorem, we establish a relationship between quasi strongly $E\times E$-preinvex functions and QSEP, which is the generalization of Proposition 3.2 in \cite{Poury1}.
 \begin{theorem}
	Let $S\subseteq R^{n}$ be a SEI set w.r.t. $\Psi$ and $F\colon S\times S\rightarrow R$ be a continuous quasi strongly E$\times$ $E$-preinvex function w.r.t. $\Psi\times \Psi$, $i.e.,$ $F$ is quasi strongly E$\times$ $E$-preinvex function to each variable. Then, $g\colon S\rightarrow R$ defined by 
	\begin{eqnarray*}
		g(s)=\inf\limits_{t\in S}F(s,t),
	\end{eqnarray*} 
is QSEP function w.r.t. $\Psi$.
\end{theorem}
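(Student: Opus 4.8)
The plan is to reduce the single inequality that defines quasi strong $E$-preinvexity of $g$ to the corresponding inequality for $F$ on $S\times S$, and to pass from values of $F$ to the infima defining $g$ by an $\varepsilon$-argument. First I would fix $u,v\in S$ and $\alpha,\lambda\in[0,1]$ and put
\[
z:=\alpha v+Ev+\lambda\Psi(\alpha u+Eu,\alpha v+Ev).
\]
Since $S$ is strongly $E$-invex w.r.t. $\Psi$, the point $z$ lies in $S$ (so $g(z)$ is defined), and the whole theorem amounts to proving the single inequality $g(z)\le\max\{g(Eu),g(Ev)\}$. I would also record at the outset that $S\times S$ is strongly $E$-invex w.r.t. $\Psi\times\Psi$ — this is immediate, componentwise, from strong $E$-invexity of $S$ — so that the hypothesis ``$F$ is quasi strongly $E\times E$-preinvex w.r.t. $\Psi\times\Psi$'' is meaningful and usable.

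Next, given $\varepsilon>0$, I would choose $p,q\in S$ with $F(Eu,Ep)<g(Eu)+\varepsilon$ and $F(Ev,Eq)<g(Ev)+\varepsilon$, and then apply the quasi strong $E\times E$-preinvexity of $F$ to the points $(u,p),(v,q)\in S\times S$ (in this order), with the same parameters $\alpha,\lambda$. Unwinding the product maps $E\times E$ and $\Psi\times\Psi$, the point produced on the left-hand side is exactly $(z,w)$, where
\[
w:=\alpha q+Eq+\lambda\Psi(\alpha p+Ep,\alpha q+Eq)\in S,
\]
and the inequality reads $F(z,w)\le\max\{F(Eu,Ep),F(Ev,Eq)\}<\max\{g(Eu),g(Ev)\}+\varepsilon$. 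Since $w\in S$, the definition of $g$ gives $g(z)=\inf_{t\in S}F(z,t)\le F(z,w)<\max\{g(Eu),g(Ev)\}+\varepsilon$, and letting $\varepsilon\downarrow 0$ yields $g(z)\le\max\{g(Eu),g(Ev)\}$; as $u,v,\alpha,\lambda$ were arbitrary, $g$ is quasi strongly $E$-preinvex w.r.t. $\Psi$.

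The step I expect to be the main obstacle is the very first one in the $\varepsilon$-argument: the product inequality naturally produces $F(Eu,Ep)$ on its right-hand side, while $g(Eu)=\inf_{t\in S}F(Eu,t)$, so one must be able to make $F(Eu,Ep)$ approach $g(Eu)$ by a suitable choice of $p\in S$. With only $E(S)\subseteq S$ (Lemma \ref{lem:1}) one merely gets $\inf_{p\in S}F(Eu,Ep)\ge g(Eu)$, which is the wrong direction; so the argument requires that $E$ map $S$ onto $S$ (consistent with the standing framework of the paper), which makes $\{Ep:p\in S\}$ cofinal for the infimum. Continuity of $F$ is what one would invoke in the alternative route where, together with a compactness/attainment hypothesis, the infimum $g(Eu)=\inf_{t\in S}F(Eu,t)$ is realized at some $t^*\in E(S)$, removing the need for $\varepsilon$. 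The remaining bookkeeping — checking that the left-hand point of the product inequality is $(z,w)$ and that $w\in S$ — is routine.
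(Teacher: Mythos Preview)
Your approach is essentially identical to the paper's: fix two points, pick near-minimizers $t_0,t_1$ for the infimum with an $\varepsilon$-cushion, verify that $S\times S$ is strongly $(E\times E)$-invex, apply the product QSEP inequality for $F$, and bound $g$ at the combined point by the infimum over the second slot. The paper carries out exactly this chain (with $s_0,s_1,t_0,t_1$ in place of your $u,v,p,q$).

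Your identification of the ``main obstacle'' is apt and, in fact, more scrupulous than the paper. The paper simply asserts the existence of $t_0,t_1\in S$ with $F(Es_1,Et_1)<g(Es_1)+\varepsilon$ without justifying why the infimum $g(Es_1)=\inf_{t\in S}F(Es_1,t)$ can be approached along points of the form $Et$; it also writes $\max\{g(Es_0)+\varepsilon,g(Es_1)+\varepsilon\}\le\max\{g(Es_0),g(Es_1)\}$ in the final line rather than passing to the limit $\varepsilon\downarrow 0$ as you do. So your proposal matches the paper's argument while being cleaner on both of these points; the surjectivity of $E\colon S\to S$ (which the paper invokes elsewhere, e.g.\ in Theorem~\ref{thm:1} and Condition~A) is indeed the natural standing hypothesis that closes the gap you flag.
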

\begin{proof}
	Let $s_{0},s_{1}\in S$ be given and $\epsilon>0$ be an arbitrary. Since $S$ is SEI set w.r.t. $\Psi$.   
	\begin{eqnarray*}
		\alpha s_{1}+Es_{1}+\lambda\Psi(\alpha s_{0}+Es_{0},\alpha s_{1}+Es_{1})\in S,~\forall~\alpha\in[0,1],~ \lambda\in[0,1].
	\end{eqnarray*}
	By the definition of infimum, there exist $t_{0},t_{1}\in S$ s.t.
	\begin{eqnarray*}
		F(Es_{1},Et_{1})<g(Es_{1})+\epsilon,~ 	F(Es_{0},Et_{0})<g(s_{0})+\epsilon.
	\end{eqnarray*}
	By strongly  $E$-invexity of $S$ w.r.t. $\Psi$, we get  
	\begin{eqnarray*}
		\alpha t_{1}+Et_{1}+\lambda\Psi(\alpha t_{0}+Et_{0},\alpha t_{1}+Et_{1})\in S,~\forall~\alpha\in[0,1],~ \lambda\in[0,1].
	\end{eqnarray*}
	It follows that the set $S\times S$ is strongly $E\times E $-invex set w.r.t. $\Psi\times \Psi$.\\ $i.e.$, for every $(s_{0},t_{0}),~(s_{1},t_{1})\in S\times S, \alpha\in[0,1],\lambda\in[0,1]$, we have
	 
	\begin{multline*}
		\alpha (s_{1},t_{1})+(E\times E)(s_{1},t_{1})+\lambda(\Psi\times\Psi)\big(\alpha (s_{0},t_{0})+(E\times E)(s_{0},t_{0}),\\(\alpha (s_{1},t_{1})+(E\times E)(s_{1},t_{1}))\big)
			\end{multline*}
		\begin{multline*}
		=(\alpha (s_{1},t_{1})+(E\times E)(s_{1},t_{1})+\lambda(\Psi\times\Psi)\big((\alpha s_{0}+Es_{0},\alpha s_{1}+Es_{1}),\\(\alpha t_{0}+Et_{0},\alpha t_{1}+Et_{1})\big)
			\end{multline*}
		\begin{multline*}
		=(\alpha s_{1},\alpha t_{1})+(Es_{1},Et_{1})+\lambda\{\Psi(\alpha s_{0} +Es_{0},\alpha s_{1}+Es_{1}),\Psi(\alpha t_{0}+Et_{0},\alpha t_{1}+Et_{1})\}
			\end{multline*}
		\begin{multline*}
		=\big(\alpha s_{1}+Es_{1}+\lambda\Psi(\alpha s_{0}+Es_{0},\alpha s_{1}+Es_{1}),\\\alpha t_{1}+Et_{1}+\lambda\Psi(\alpha t_{0}+Et_{0},\alpha t_{1}+Et_{1})\big)\in S\times S, 
	\end{multline*}  
	where $E\times E\colon R^{n}\times R^{n}\rightarrow R^{n}\times R^{n}$ and $\Psi\times \Psi\colon (R^{n}\times R^{n})\times( R^{n}\times R^{n})\rightarrow R^{n}\times R^{n}$ are the maps.\\
	By the Definition of infimum and the quasi strongly $E\times E$-preinvexity of $F$ w.r.t. $\Psi\times \Psi,$\\  we have\\
	$g\left(\alpha s_{1}+Es_{1}+\lambda\Psi(\alpha s_{0}+Es_{0},\alpha s_{1}+Es_{1})\right)$
	\begin{eqnarray*}
		\hspace{1.5cm}=\inf\limits_{t\in S} F(\alpha s_{1}+Es_{1}+\lambda\Psi(\alpha s_{0}+Es_{0},\alpha s_{1}+Es_{1}),t)
		\end{eqnarray*}
	 \begin{multline*}
	 	\hspace{2.5cm}\leq F\big(\alpha s_{1}+Es_{1}+\lambda\Psi(\alpha s_{0}+Es_{0},\alpha s_{1}+Es_{1}),\\\alpha t_{1}+Et_{1}+\lambda\Psi(\alpha t_{0}+Et_{0},\alpha t_{1}+Et_{1})\big)
	 \end{multline*}
	 	\begin{multline*}
	\hspace{2.5cm}=F\big((\alpha s_{1},\alpha t_{1})+(Es_{1},Et_{1})+\lambda\{\Psi(\alpha s_{0} +Es_{0},\alpha s_{1}+Es_{1}),\\\Psi(\alpha t_{0}+Et_{0},\alpha t_{1}+Et_{1})\}\big)
	\end{multline*}
		\begin{multline*}
		\hspace{2.5cm}=F\big(\alpha (s_{1},t_{1})+(E\times E)(s_{1},t_{1})+\lambda(\Psi\times\Psi)((\alpha s_{0}+Es_{0},\\\alpha s_{1}+Es_{1}),(\alpha t_{0}+Et_{0},\alpha t_{1}+Et_{1}))\big)
	\end{multline*}
		\begin{multline*}
			\hspace{2.5cm}=F\big(\alpha (s_{1},t_{1})+(E\times E)(s_{1},t_{1})+\lambda(\Psi\times\Psi)(\alpha (s_{0},t_{0})+(E\times E)(s_{0},t_{0}),\\(\alpha (s_{1},t_{1})+(E\times E)(s_{1},t_{1})))\big)
		\end{multline*}
	\begin{eqnarray*}
		\hspace{2.5cm}	&\leq&\max\left\{F\left(Es_{0},Et_{0}\right),F\left(Es_{1},Et_{1}\right)\right\}\\
			&<&\max\left\{g(Es_{0})+\epsilon,g(Es_{1})+\epsilon\right\}\\
				&\leq&\max\{g(Es_{0}),g(Es_{1})\}.\hspace{8.5cm}  
	\end{eqnarray*}
		 
 \noindent
	Therefore, $g(s)=\inf\limits_{t\in S}F(s,t)$ is QSEP  function w.r.t. $\Psi$.
\end{proof} 
\begin{corollary}
	Let $S\subseteq R^{n}$ be a SEI set w.r.t. $\Psi$ and $F\colon \underbrace{S\times S\times\dots\times S}_{n\text{-times}}\rightarrow R$ be a continuous quasi strongly $\underbrace{E\times E\times\dots\times E}_{n\text{-times}}$-preinvex function w.r.t. $\underbrace{\Psi\times \Psi\times\dots\times\Psi}_{n\text{-times}}$, $i.e.,$ $F$ is quasi strongly $\underbrace{E\times E\times\dots\times E}_{n\text{-times}}$-preinvex function to each variable. Then, the function $g\colon S\rightarrow R$ defined by 
	\begin{eqnarray*}
		g(s_{1})= \inf\limits_{s_{2},...,s_{n}\in S}F(s_{1},s_{2},...,s_{n}),
	\end{eqnarray*} is QSEP function w.r.t. $\Psi$.
\end{corollary}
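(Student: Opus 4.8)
The plan is to deduce the corollary from the preceding theorem by collapsing the last $n-1$ arguments of $F$ into a single block variable. Write $\bar{s}=(s_{2},\dots,s_{n})$, so that $\bar{s}$ ranges over $\bar{S}:=\underbrace{S\times\dots\times S}_{(n-1)\text{-times}}$, and set $\bar{E}:=\underbrace{E\times\dots\times E}_{(n-1)\text{-times}}\colon\bar{S}\to\bar{S}$ together with $\bar{\Psi}:=\underbrace{\Psi\times\dots\times\Psi}_{(n-1)\text{-times}}$. Then $g(s_{1})=\inf_{\bar{s}\in\bar{S}}F(s_{1},\bar{s})$, which has exactly the shape treated in the previous theorem, except that the two slots now carry the data $(S,E,\Psi)$ and $(\bar{S},\bar{E},\bar{\Psi})$ respectively rather than two identical copies. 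Since the proof of that theorem never uses the coincidence of the two factors --- it uses only that each factor is a strongly $E$-invex set with respect to its own map and that $F$ is quasi strongly preinvex along the resulting product direction --- it applies verbatim in this asymmetric setting, so it suffices to verify the two hypotheses for $(\bar{S},\bar{E},\bar{\Psi})$.

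First I would check that $\bar{S}$ is strongly $\bar{E}$-invex with respect to $\bar{\Psi}$. This is immediate because the defining inclusion $\alpha\bar{t}+\bar{E}\bar{t}+\lambda\bar{\Psi}(\alpha\bar{s}+\bar{E}\bar{s},\alpha\bar{t}+\bar{E}\bar{t})\in\bar{S}$ decouples coordinatewise into $n-1$ copies of the strong $E$-invexity of $S$ with respect to $\Psi$; equivalently, one runs the $2$-fold product computation already carried out in the proof of the preceding theorem and iterates it by a trivial induction on the number of factors. Second, regarded as a map on $S\times\bar{S}$, the function $F$ is still continuous (it lives on the same space $S^{n}$), and by hypothesis it is quasi strongly $(E\times\bar{E})$-preinvex along the product direction, i.e. quasi strongly $\underbrace{E\times\dots\times E}_{n\text{-times}}$-preinvex, which is precisely the joint property the theorem requires of its second-slot data. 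Invoking the previous theorem with these data then yields that $g$ is QSEP with respect to $\Psi$ on $S$.

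There is no genuinely hard step here: the content is entirely in the preceding theorem, and the only points requiring care are notational --- keeping the $(n-1)$-fold products straight --- together with the observation that the theorem's proof is insensitive to the two factors of the domain being distinct, so that the ``asymmetric'' product $S\times S^{\,n-1}$ is a legitimate instance of it. If one prefers to avoid this mild restatement, the same reasoning can be organized as an induction on $n$: the base case $n=2$ is the theorem itself, and the inductive step is the identical block-grouping argument with $S^{\,n-1}$ in place of $S$.
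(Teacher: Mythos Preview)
The paper supplies no proof for this corollary, leaving it as an immediate consequence of the preceding theorem. Your proposal is correct and fills in exactly the intended justification: grouping the last $n-1$ coordinates into a single block and observing that the theorem's argument never uses that the two factors of the product coincide is the natural way to read the $n$-variable statement off from the two-variable one.
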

\begin{definition}
	Let $S\subseteq R^{n}$ be a SEI set and $h\colon S\rightarrow R$ be a function. The lower level set of $h$ at $r\in R$ is defined as$\colon$
\begin{eqnarray*}
	K_{r}=\{s\in S\colon h(s)\leq r\}.
\end{eqnarray*}
\end{definition} 

In this theorem, we show an important relationship between lower level sets and QSEP functions.
\begin{theorem}
	Let $S\subseteq R^{n}$ be a SEI set. If the lower level set $K_{r}$ is SEI set $\forall r\in R$, then the function $h\colon S\rightarrow R$ is QSEP on $S$.
\end{theorem}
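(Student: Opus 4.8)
The plan is to exploit the level-set hypothesis directly, in exact analogy with the classical fact that $h$ is quasiconvex if and only if all of its sublevel sets are convex. Fix arbitrary $s,t\in S$ and $\alpha,\lambda\in[0,1]$, and put
$$r:=\max\{h(Es),h(Et)\}.$$
By Lemma \ref{lem:1} we have $E(S)\subseteq S$, so $Es,Et\in S$; and since $h(Es)\le r$ and $h(Et)\le r$, both $Es$ and $Et$ belong to the lower level set $K_r$. By hypothesis $K_r$ is a SEI set w.r.t. $\Psi$.

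Next I would invoke the definition of a SEI set for $K_r$: since $K_r$ is strongly $E$-invex w.r.t. $\Psi$, the point
$$\alpha t+Et+\lambda\Psi(\alpha s+Es,\alpha t+Et)$$
lies in $K_r$. By the definition of $K_r$ this is exactly the inequality
$$h\big(\alpha t+Et+\lambda\Psi(\alpha s+Es,\alpha t+Et)\big)\le r=\max\{h(Es),h(Et)\},$$
and since $s,t\in S$ and $\alpha,\lambda\in[0,1]$ were arbitrary, $h$ is QSEP on $S$.

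The step requiring care is the middle one: one must apply the SEI property of $K_r$ at the correct pair of its points so that the element it yields is literally $\alpha t+Et+\lambda\Psi(\alpha s+Es,\alpha t+Et)$ and not some variant (for instance, one involving $E$ applied twice or a different source pair); this is where the interplay between $E$, $\Psi$ and the definitions, together with Lemma \ref{lem:1}, must be used. Everything else is just unwinding definitions: no convexity, differentiability, or Condition~A enters here, and the whole argument is a purely set-membership manipulation, the $E$-invex counterpart of the standard equivalence between quasiconvexity of a function and convexity of all its sublevel sets.
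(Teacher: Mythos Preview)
Your approach is exactly the paper's: set $r=\max\{h(Es),h(Et)\}$, use Lemma~\ref{lem:1} to place the relevant points in $S$, then appeal to the SEI property of $K_r$ to obtain membership of $\alpha t+Et+\lambda\Psi(\alpha s+Es,\alpha t+Et)$ in $K_r$ and hence the QSEP inequality.

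You are right to flag the middle step as the only delicate one. Applying the SEI definition of $K_r$ to the pair $Es,Et\in K_r$ would produce a point of the form $\alpha(Et)+E(Et)+\lambda\Psi(\alpha(Es)+E(Es),\alpha(Et)+E(Et))$, not the desired $\alpha t+Et+\lambda\Psi(\alpha s+Es,\alpha t+Et)$; to get the latter one needs $s,t\in K_r$ themselves. The paper handles this point in precisely the same informal way you do: after setting $r=\max\{h(Es),h(Et)\}$ it simply writes ``and $s,t\in K_r$'' and proceeds, without explaining why the original $s,t$ (as opposed to $Es,Et$) lie in $K_r$. So the care you call for is exactly where the paper's own argument is tacit; neither Lemma~\ref{lem:1} nor the other definitions in the paper supply this missing justification, and your proposal matches the paper's proof both in strategy and in what it leaves implicit.
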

\begin{proof}
	Assume $S\subseteq R^{n}$ is SEI set and the set $K_{r}$ be SEI for each $r\in R$. For each $s,t\in S,\alpha\in[0,1]$ and $\lambda\in [0,1]$, we have 
	\begin{eqnarray*}
		\alpha t+Et+\lambda\Psi(\alpha s+Es,\alpha t+Et)\in S.
	\end{eqnarray*}
	By using Lemma \ref{lem:1}, we get $Et\in S~ \forall t\in S$.
	Let $r=\max\{h(Es),h(Et)\}$ and $s,t \in K_{r}.$
	Since $K_{r}$ is SEI set, we have 
	\begin{eqnarray*}
		\alpha t+Et+\lambda\Psi(\alpha s+Es,\alpha t+Et)\in K_{r},
	\end{eqnarray*}
	which implies 
	
\begin{eqnarray*}
	h(\alpha t+Et+\lambda\Psi(\alpha s+Es,\alpha t+Et)) \leq r=\max\{h(Es),h(Et)\},
\end{eqnarray*}or
\begin{eqnarray*}
	h(\alpha t+Et+\lambda\Psi(\alpha s+Es,\alpha t+Et))\leq  \max\{h(Es),h(Et)\}.
\end{eqnarray*}
	\noindent
	Therefore, the function $h$ is QSEP on $S$.   
\end{proof}      	

Now, we extend the class of SEI function to quasi strongly $E$-invex (QSEI) function and pseudo strongly $E$-invex (PSEI) function on SEI set as follows$\colon$
\begin{definition} Let $S\subseteq  {R}^{n}$ be a SEI set. A differentiable function $h\colon S\rightarrow  {R}$ is   called QSEI w.r.t. $\Psi$ on $S$, if $\forall s,t\in S$ and $\alpha\in[0,1]$, we have 
\begin{eqnarray*}
	h(Es)\leq h(Et) \implies\nabla h(Et)\Psi(\alpha s+Es,\alpha t+Et)^{T}\leq0.
\end{eqnarray*} 
	For $\alpha=0$, $h$ reduces to $E$-quasiinvex function \cite{Jaiswal}.
\end{definition}

\begin{example}
	Consider $E\colon R^{2}\rightarrow R^{2}$ is defined by $E(s,t)=(0,t)$, and $\Psi\colon R^{2}\times R^{2}\rightarrow R^{2}$ is defined by $\Psi((s_{1},t_{1}),(s_{2},t_{2}))=(s_{1}-s_{2},t_{1}-t_{2})$. The set $S=\{(s,t)\in R^{2}\colon s,t\leq0\}$ is SEI w.r.t. $\Psi$,  and the function $h\colon S\rightarrow R$ is defined by $h(s,t)=s^{3}+t^{3}$. Then, the function $h$ is QSEI w.r.t. $\Psi$ on   $S$. For all $s=(s_{1},t_{1}),t=(s_{2},t_{2})\in S$,~$t_{1}\leq t_{2}$ and $\alpha\in[0,1],$ we have
\begin{eqnarray*}
	h(Es)\leq h(Et) \implies\nabla h(Et)\Psi(\alpha s+Es,\alpha t+Et)^{T}
\end{eqnarray*}
	\begin{eqnarray*}
	\hspace{4.5cm}=3t_{2}^{2}(\alpha+1)(t_{1}-t_{2})\leq0.
		\end{eqnarray*}
	But the function $h$ is not strongly $E$-invex w.r.t. $\Psi$ on $S$. Particularly, at points $t_{1}=-\frac{1}{2},t_{2}=-\frac{1}{4}$, we have
	\begin{eqnarray*}
	\hspace{3.5cm}	0&\leq& h(Es)- h(Et)-\nabla h(Et)\Psi(\alpha s+Es,\alpha t+Et)^{T}\\ &=&t_{1}^{3}-t_{2}^{3}-3t_{2}^{2}(\alpha+1)(t_{1}-t_{2})\\&=&-\frac{7}{64}+\frac{3(\alpha+1)}{64}.
		\end{eqnarray*}
 \end{example}

In this theorem, we show a relation between SEI functions and QSEI functions.
\begin{theorem}
	Let $S\subseteq R^{n}$ be a SEI set. Let $h\colon S\rightarrow R$ be a differentiable SEI function on $S$ and $h(Es)\leq h(Et)$ for every $s,t\in S$. Then, the function $h$ is QSEI on $S$.
\end{theorem}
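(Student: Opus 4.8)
The plan is to mirror the argument of Theorem~\ref{thm:6}, replacing the preinvexity inequalities by their differentiable counterparts from Definition~\ref{5P}. Since $h$ is a differentiable SEI function w.r.t.\ $\Psi$ on $S$, Definition~\ref{5P} gives, for all $s,t\in S$ and all $\alpha\in[0,1]$,
\begin{eqnarray*}
\nabla h(Et)\,\Psi(\alpha s+Es,\alpha t+Et)^{T}\leq h(Es)-h(Et).
\end{eqnarray*}
First I would fix arbitrary $s,t\in S$ and $\alpha\in[0,1]$ and take the hypothesis $h(Es)\leq h(Et)$ of the implication defining QSEI as granted (under the blanket assumption of the theorem this holds for every pair, but one only needs it where the implication is invoked). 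Then $h(Es)-h(Et)\leq 0$, and chaining this with the displayed inequality yields
\begin{eqnarray*}
\nabla h(Et)\,\Psi(\alpha s+Es,\alpha t+Et)^{T}\leq h(Es)-h(Et)\leq 0,
\end{eqnarray*}
which is exactly the conclusion demanded by the definition of a QSEI function. Hence $h$ is QSEI on $S$, and this completes the proof.

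I do not expect any genuine obstacle: the statement is a one-line consequence of Definition~\ref{5P} together with the sign hypothesis, and it requires neither Condition~$A$, nor Lemma~\ref{lem:1}, nor any continuity/convexity machinery. The only subtlety worth spelling out is purely logical, namely that the definition of QSEI is an implication with antecedent $h(Es)\leq h(Et)$, so it is enough to verify the gradient inequality $\nabla h(Et)\,\Psi(\alpha s+Es,\alpha t+Et)^{T}\leq 0$ under that antecedent rather than unconditionally; the theorem's hypothesis $h(Es)\leq h(Et)$ for all $s,t\in S$ simply makes the antecedent always true. In this sense the result is the differentiable analogue of Theorem~\ref{thm:6}, and I would phrase the write-up to emphasise that parallel.
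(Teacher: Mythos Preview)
Your proposal is correct and follows exactly the approach intended by the paper, which merely states that the proof is obvious from Definition~\ref{5P}. You have simply spelled out that one-line argument in full, and nothing more is needed.
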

\begin{proof}
	By the Definition \ref{5P}, the proof is obvious.
\end{proof}

In the following theorem, we show a relationship between SEP and QSEP functions.
\begin{theorem}
	Let $S\subseteq R^{n}$ be a SEI set. Let $h\colon S\rightarrow R$ be a differentiable SEI function on $S$, $\Psi$ satisfies the $\bf {Condition}~{A}$ and  $h(Es)\leq h(Et)$ for every $s,t\in S$. Then, the function $h$ is QSEP on $S$. 
\end{theorem}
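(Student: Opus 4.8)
The plan is to obtain this statement by chaining together the two results already proved in the excerpt, namely Theorem \ref{thm:1} and Theorem \ref{thm:6}. The hypotheses have been arranged precisely so that the differentiable SEI function $h$ first upgrades to a strongly $E$-preinvex (SEP) function, and then the one-sided inequality $h(Es)\le h(Et)$ collapses the SEP inequality into the QSEP inequality.

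First I would invoke Theorem \ref{thm:1}: since $S$ is a SEI set with respect to $\Psi$, $E\colon S\rightarrow S$ is onto, $h$ is differentiable and SEI with respect to $\Psi$ on $S$, and $\Psi$ satisfies Condition $A$, all the hypotheses of Theorem \ref{thm:1} are met, so $h$ is SEP with respect to $\Psi$ on $S$. Concretely, this gives, for all $s,t\in S$, $\alpha\in[0,1]$ and $\lambda\in[0,1]$,
\begin{eqnarray*}
	h(\alpha t+Et+\lambda\Psi(\alpha s+Es,\alpha t+Et))\leq \lambda h(Es)+(1-\lambda)h(Et).
\end{eqnarray*}

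Next I would apply Theorem \ref{thm:6}, whose hypotheses are now exactly "$S$ SEI, $h$ SEP with respect to $\Psi$ on $S$, and $h(Es)\le h(Et)$ for all $s,t\in S$", all of which hold. This yields
\begin{eqnarray*}
	h(\alpha t+Et+\lambda\Psi(\alpha s+Es,\alpha t+Et))\leq h(Et)=\max\{h(Es),h(Et)\},
\end{eqnarray*}
so $h$ is QSEP on $S$, as claimed. Alternatively, one can reprove the last step inline: from the SEP inequality and $h(Es)\le h(Et)$ we get $\lambda h(Es)+(1-\lambda)h(Et)\le \lambda h(Et)+(1-\lambda)h(Et)=h(Et)=\max\{h(Es),h(Et)\}$, which is the QSEP inequality.

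There is essentially no analytic obstacle here; the work is bookkeeping of hypotheses. The one point worth flagging is that Theorem \ref{thm:1} is stated for an \emph{open} SEI set $S$, whereas the present statement does not explicitly assume openness — so in the write-up I would either add the openness hypothesis to match Theorem \ref{thm:1}, or note that we are applying Theorem \ref{thm:1} under the standing assumptions that make it applicable. Modulo that, the proof is the two-line composition "Theorem \ref{thm:1} $\Rightarrow$ SEP, then Theorem \ref{thm:6} $\Rightarrow$ QSEP."
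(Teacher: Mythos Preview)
Your proposal is correct and matches the paper's own proof, which simply states that the result is obvious from Theorem \ref{thm:1} and Theorem \ref{thm:6}. Your remark about the missing openness hypothesis is a valid observation that the paper itself glosses over.
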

\begin{proof}
	Proof is obvious from Theorem \ref{thm:1} and Theorem \ref{thm:6}.
\end{proof}

\begin{definition} Let $S\subseteq  {R}^{n}$ be a SEI set. A differentiable function $h\colon S\rightarrow {R}$ is  called pseudo  strongly $E$-invex (PSEI) w.r.t. $\Psi$ on $S$, if $\forall u,v\in S$ and $\alpha\in[0,1]$, we have 
	\begin{eqnarray*}
		\nabla h(Et)\Psi(\alpha s+Es,\alpha t+Et)^{T}\geq0\implies h(Es)\geq h(Et).
	\end{eqnarray*}  
	
	For $\alpha=0$, $h$ reduces to $E$-pseudo invex function  \cite{Jaiswal}.
\end{definition}

\begin{example}\label{Exm4}
	Consider $E\colon R^{2}\rightarrow R^{2}$ is defined by $E(s,t)=(0,t)$, and $\Psi\colon R^{2}\times R^{2}\rightarrow R^{2}$ is defined by $\Psi((s_{1},t_{1}),(s_{2},t_{2}))=(s_{1}-s_{2},t_{1}-t_{2})$. The set $S=\{(s,t)\in R^{2}\colon s,t\geq0\}$ is SEI w.r.t. $\Psi$, and the function $h\colon S\rightarrow R$ is defined by $h(s,t)=-s^{2}-t^{2}$. Then, the function $h$ is PSEI w.r.t. $\Psi$ on $S$. For all $s=(s_{1},t_{1}),t=(s_{2},t_{2})\in S,~ t_{1}\leq t_{2}$ and $\alpha\in[0,1]$, we have
\begin{eqnarray*}
	\nabla h(Et)\Psi(\alpha s+Es,\alpha t+Et)^{T}\geq0 \implies h(Es)-h(Et)
\end{eqnarray*}
	\begin{eqnarray*}
	\hspace{4cm}=(t_{2}^{2}-t_{1}^{2})\geq0\implies h(Es)\geq h(Et).
		\end{eqnarray*}
	But the function $h$ is not SEI w.r.t. $\Psi$ on $S$. Particularly, at a point $\alpha=0$,
	\begin{eqnarray*}
	\hspace{2cm}0&\leq& h(Es)- h(Et)-\nabla h(Et)\Psi(\alpha s+Es,\alpha t+Et)^{T},\\&=&-(t_{2}-t_{1})^{2}\leq0.
	\end{eqnarray*} 
\end{example}

In this theorem, we show an important relationship between SEI functions and PSEI functions.

\begin{theorem}
	Let $S\subseteq R^{n}$ be a SEI set and $h\colon S\rightarrow R$ be a SEI function on $S$. If 
	\begin{eqnarray*}
		\nabla h(Et)\Psi(\alpha s+Es,\alpha t+Et)^{T}\geq0, ~\forall s,t\in S,\alpha\in[0,1],
		\end{eqnarray*}
	Then, the function $h$ is PSEI on $S$.
\end{theorem}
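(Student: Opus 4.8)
The plan is to derive the PSEI implication directly from the differentiable strongly $E$-invexity inequality of Definition \ref{5P}. Fix arbitrary $s,t\in S$ and $\alpha\in[0,1]$, and suppose the antecedent of the PSEI implication holds, i.e. $\nabla h(Et)\,\Psi(\alpha s+Es,\alpha t+Et)^{T}\geq 0$. Since $h$ is SEI on $S$, Definition \ref{5P} gives
\begin{eqnarray*}
	\nabla h(Et)\,\Psi(\alpha s+Es,\alpha t+Et)^{T}\leq h(Es)-h(Et).
\end{eqnarray*}

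First I would chain these two inequalities: combining the assumed nonnegativity of the gradient term with the SEI estimate yields $0\leq h(Es)-h(Et)$, that is, $h(Es)\geq h(Et)$. Since $s,t$ and $\alpha$ were arbitrary, this establishes the defining implication of a PSEI function, so $h$ is PSEI on $S$. (In fact, the extra hypothesis $\nabla h(Et)\,\Psi(\alpha s+Es,\alpha t+Et)^{T}\geq 0$ for \emph{all} $s,t\in S$ and $\alpha\in[0,1]$ makes the antecedent always true, so one even obtains $h(Es)\geq h(Et)$ for all $s,t\in S$ unconditionally; but only the implication is needed for PSEI.)

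There is essentially no obstacle here: the argument is a one-line consequence of Definition \ref{5P}, exactly parallel to the proofs of the preceding two theorems that invoke Definition \ref{5P}. The only point worth a moment's care is making sure the quantifiers match — the SEI inequality and the hypothesis must be applied at the \emph{same} triple $(s,t,\alpha)$ — after which the conclusion follows by transitivity of $\leq$.
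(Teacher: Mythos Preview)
Your proof is correct and matches the paper's own argument, which simply states that the result is obvious from Definition~\ref{5P}. You have merely written out the one-line chain $0\le \nabla h(Et)\Psi(\alpha s+Es,\alpha t+Et)^{T}\le h(Es)-h(Et)$ that the paper leaves implicit.
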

\begin{proof}
	By the Definition \ref{5P}, the proof is obvious.
\end{proof} 
 
\begin{theorem}\label{thm:11}
	Let $h_{j}\colon {R}^{n}\rightarrow  {R},~~ 1\leq j\leq n$, be QSEP functions w.r.t. $\Psi$  on $ {R}^{n}$. If $E(S)\subseteq S$, then   
	\begin{eqnarray*}
		S=\{s\in  {R}^{n}\colon h_{j}(s)\leq0,~~1\leq j\leq n\}
	\end{eqnarray*}
is SEI set.
\end{theorem}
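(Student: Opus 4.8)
The plan is to verify directly the defining inclusion of a strongly $E$-invex set: we must show that $\alpha t+Et+\lambda\Psi(\alpha s+Es,\alpha t+Et)\in S$ for all $s,t\in S$ and all $\alpha,\lambda\in[0,1]$. Since membership in $S$ is described by the $n$ scalar inequalities $h_{j}(\cdot)\leq 0$, it suffices to bound each $h_{j}$ at the point $\alpha t+Et+\lambda\Psi(\alpha s+Es,\alpha t+Et)$ by $0$.

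First I would fix $s,t\in S$ and an index $j\in\{1,\dots,n\}$. Because $E(S)\subseteq S$ by hypothesis and $s,t\in S$, we have $Es,Et\in S$, and therefore $h_{j}(Es)\leq 0$ and $h_{j}(Et)\leq 0$ by the very definition of $S$; consequently $\max\{h_{j}(Es),h_{j}(Et)\}\leq 0$. Next, since $h_{j}$ is QSEP w.r.t. $\Psi$ on $R^{n}$ (and $R^{n}$ is trivially a SEI set w.r.t. any $\Psi$), the defining inequality of a QSEP function gives, for every $\alpha,\lambda\in[0,1]$,
\[
h_{j}\big(\alpha t+Et+\lambda\Psi(\alpha s+Es,\alpha t+Et)\big)\leq\max\{h_{j}(Es),h_{j}(Et)\}\leq 0 .
\]
As $j\in\{1,\dots,n\}$ was arbitrary, the point $\alpha t+Et+\lambda\Psi(\alpha s+Es,\alpha t+Et)$ satisfies all $n$ defining inequalities of $S$, hence it lies in $S$. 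Since $s,t\in S$ and $\alpha,\lambda\in[0,1]$ were arbitrary, $S$ is SEI w.r.t. $\Psi$. (If $S=\emptyset$ the conclusion holds vacuously.)

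The only genuinely load-bearing step is the passage from $s,t\in S$ to $h_{j}(Es),h_{j}(Et)\leq 0$, which is exactly where the hypothesis $E(S)\subseteq S$ enters: without it one could only assert $h_{j}(s),h_{j}(t)\leq 0$, and the QSEP inequality is stated in terms of $h_{j}(Es)$ and $h_{j}(Et)$, not $h_{j}(s)$ and $h_{j}(t)$. Everything else is a direct substitution into the definitions of QSEP and of a SEI set, so I expect no further obstacles.
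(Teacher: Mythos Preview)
Your proof is correct and follows essentially the same route as the paper: apply the QSEP inequality for each $h_{j}$, then use $E(S)\subseteq S$ to get $h_{j}(Es),h_{j}(Et)\leq 0$, so the SEI combination satisfies all defining inequalities of $S$. Your write-up is in fact more explicit than the paper's, spelling out clearly where the hypothesis $E(S)\subseteq S$ is invoked and handling the empty case.
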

\begin{proof}
	Since $h_{j}(s),~~ 1\leq j\leq n$, are QSEP functions, for every $s,t\in S\subseteq R^{n},\alpha\in[0,1]~~\&~~\lambda\in[0,1]$, we get
	\begin{eqnarray*}
		h_{j}(\alpha t+Et+\lambda\Psi(\alpha s+Es,\alpha t+Et))&\leq& \max\{(h_{j}(Es) ,(h_{i}(Et)\},\\
		&\leq& 0,
	\end{eqnarray*}
	where we use the assumption $E(S)\subseteq S$ to obtain the right most of the above inequality. 
	Hence, 
\begin{eqnarray*}
	\alpha t+Et+\lambda\Psi(\alpha s+Es,\alpha t+Et)\in S,
\end{eqnarray*}
	which is required results.
\end{proof}
\begin{theorem}\label{thm:12}
	Let $h_{j}\colon {R}^{n}\rightarrow  {R}, ~~1\leq j\leq n$, be a QSEP functions w.r.t. $\Psi$  on $R^{n}$, then  
\begin{eqnarray*}
	S=\bigcap\limits_{j=1}^{n}\left\{s\in R^{n}\colon h_{j}(s)\leq0,~~1\leq j\leq n\right\},
\end{eqnarray*}is a SEI set.
\end{theorem}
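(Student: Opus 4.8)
The plan is to obtain this statement as an immediate consequence of Theorem \ref{thm:11} together with the stability of the class of SEI sets under intersection. First I would write $S_j := \{s\in R^n : h_j(s)\le 0\}$ for $1\le j\le n$, so that the set appearing in the statement is exactly $S=\bigcap_{j=1}^{n} S_j$. Since each $h_j$ is QSEP w.r.t. $\Psi$ on $R^n$, I would apply Theorem \ref{thm:11} to the single constraint function $h_j$ to conclude that $S_j$ is a SEI set w.r.t. $\Psi$. Finally I would invoke the Lemma asserting that the intersection of any family of SEI sets w.r.t. $\Psi$ is again SEI w.r.t. $\Psi$, applied to the finite family $\{S_j\}_{j=1}^{n}$, which yields that $S=\bigcap_{j=1}^{n} S_j$ is SEI.

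Unpacking the mechanism that Theorem \ref{thm:11} supplies, the underlying direct argument is this: given $s,t\in S$ and $\alpha,\lambda\in[0,1]$, set $z=\alpha t+Et+\lambda\Psi(\alpha s+Es,\alpha t+Et)$; QSEP of $h_j$ gives $h_j(z)\le\max\{h_j(Es),h_j(Et)\}$, and since $Es,Et\in S$ one has $h_j(Es)\le 0$ and $h_j(Et)\le 0$, hence $h_j(z)\le 0$ for every $j$, i.e. $z\in S$. Writing it out this way makes transparent that the only point requiring care is the very one flagged in Theorem \ref{thm:11}: passing from $h_j(z)\le\max\{h_j(Es),h_j(Et)\}$ to $h_j(z)\le 0$ needs $Es,Et\in S$, equivalently $E(S)\subseteq S$ (equivalently $E(S_j)\subseteq S_j$ for each $j$).

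Thus the single genuine obstacle is the $E$-invariance of $S$. I would handle it by carrying the inclusion $E(S)\subseteq S$ along as a hypothesis, exactly as in Theorem \ref{thm:11}; one cannot instead deduce it from Lemma \ref{lem:1} here, since that would presuppose the very conclusion being proved. Beyond securing this inclusion, nothing is computationally involved: the proof reduces to bookkeeping over the finitely many indices $j$ and a citation of the intersection lemma, so I expect no difficulty past that point.
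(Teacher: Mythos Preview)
Your proposal is correct and follows essentially the same route as the paper: define $S_j=\{s\in R^n:h_j(s)\le 0\}$, apply Theorem~\ref{thm:11} to each $h_j$ to get that $S_j$ is SEI, and then cite the intersection lemma to conclude that $\bigcap_{j=1}^n S_j$ is SEI. Your additional remark that the hypothesis $E(S)\subseteq S$ (equivalently $E(S_j)\subseteq S_j$) must be carried along is well taken---the paper's statement of Theorem~\ref{thm:12} omits it, but its proof tacitly relies on it through the invocation of Theorem~\ref{thm:11}.
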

\begin{proof}
	By the Theorem \ref{thm:11}, the sets $S_{j}=\{s\in  {R}^{n}\colon h_{j}(s)\leq0,\},~~1\leq j\leq n$, are strongly $E$-invex. Thus, the intersection $\bigcap\limits_{j=1}^{n}S_{j}$ of $S_{j}$ is also SEI set.  
\end{proof}

\section{{Non-linear programming problem} }\label{Sec:Non-linear programming problem}
 
In this section, we consider the $NLPP$ for QSEP functions is known as QSEP programming problem, which generalize the results derived by Iqbal et al. \cite{Hussain2}.

\begin{equation}\label{eqn:1}
	(P)~
	\begin{cases}
		Min~ h_{0}(s)\\
		
		h_{j}(s)\leq 0,~~ j=1,2,...,m,\\
		
		s\in  {R}^{n},
	\end{cases}
\end{equation}

\noindent
 where $h_{j}\colon {R}^{n}\rightarrow  {R},~~0\leq j\leq m$, are QSEP on $ {R}^{n}$.\\ Here, $X$ represents the nonempty set of feasible solutions of QSEP programming problem$\colon$
\begin{equation}\label{eqn:2}
	X=\{s\in  {R}^{n}\colon h_{j}(s)\leq0,~~1\leq j\leq m\}.
\end{equation}

\begin{lemma}\label{lem:3}  
	 If $E(X)\subseteq X$, then $X$ is a SEI set w.r.t. $\Psi$.
\end{lemma}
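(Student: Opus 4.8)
The plan is to mimic the argument of Theorem \ref{thm:11} verbatim, since the feasible set $X$ in \eqref{eqn:2} has exactly the same shape as the set $S$ appearing there, only with $m$ constraints instead of $n$. First I would fix arbitrary $s,t\in X$ and arbitrary $\alpha\in[0,1]$, $\lambda\in[0,1]$, and aim to show that the point
\[
w:=\alpha t+Et+\lambda\Psi(\alpha s+Es,\alpha t+Et)
\]
again satisfies all the defining inequalities $h_j(w)\le 0$, $1\le j\le m$.

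The key step is to invoke, for each index $j$, the QSEP inequality for $h_j$ on $R^{n}$, which gives
\[
h_j(w)=h_j\big(\alpha t+Et+\lambda\Psi(\alpha s+Es,\alpha t+Et)\big)\le \max\{h_j(Es),h_j(Et)\}.
\]
Then I would use the hypothesis $E(X)\subseteq X$: since $s,t\in X$, the images $Es,Et$ lie in $X$, whence $h_j(Es)\le 0$ and $h_j(Et)\le 0$ for every $j$, so the right-hand side above is $\le 0$. Combining the two displays yields $h_j(w)\le 0$ for all $1\le j\le m$, i.e. $w\in X$, which is precisely the condition that $X$ is strongly $E$-invex w.r.t. $\Psi$.

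Since $s,t,\alpha,\lambda$ were arbitrary, this completes the argument. Alternatively, one may simply note that this is the content of Theorem \ref{thm:11} with the index range $1\le j\le n$ replaced by $1\le j\le m$, so the result follows immediately. I do not anticipate any genuine obstacle here; the only point requiring a word of care is making explicit that $E(X)\subseteq X$ is exactly what lets one pass from $\max\{h_j(Es),h_j(Et)\}$ to the bound $0$, and (if one wants $X$ itself to be nonempty and well-behaved) recording via Lemma \ref{lem:1} that $Et\in X$ for all $t\in X$.
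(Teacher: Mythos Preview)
Your proposal is correct and matches the paper's own approach: the paper simply remarks that the result is obvious from Theorem~\ref{thm:11}, and you have both cited that theorem and spelled out its argument with $m$ in place of $n$. There is nothing to add.
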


\begin{proof}
	By the Theorem \ref{thm:11}, the result is obvious.
\end{proof} 
 
\begin{lemma}\label{lemma 4}
	Let  $X$ be a SEI set w.r.t. $\Psi$. If $s\in X$ is feasible solution of (\ref{eqn:1}), then $\alpha s+Es, \forall\alpha\in[0,1]$ is also a feasible solution of (\ref{eqn:1}).
\end{lemma}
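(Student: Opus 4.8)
The plan is to reduce the statement to the pointwise constraint inequalities $h_j(\alpha s + Es) \le 0$, $1 \le j \le m$, and to derive these from Theorem~\ref{thm:2} together with Lemma~\ref{lem:1}.

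First I would use the hypothesis that $X$ is a SEI set w.r.t. $\Psi$: Lemma~\ref{lem:1} then gives $E(X) \subseteq X$, so $Es \in X$ for every $s \in X$. By the description (\ref{eqn:2}) of the feasible set, a feasible point $s$ therefore also satisfies $h_j(Es) \le 0$ for all $j = 1, \dots, m$. This is the only place where the assumption ``$X$ is SEI'' (as opposed to merely nonempty) is actually needed, and it is the one step deserving a moment's care.

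Next, each $h_j$ is QSEP w.r.t. $\Psi$ on $R^n$, hence QSEP on the SEI set $X$ as well, since the defining inequality holds for all pairs in $R^n$ and thus a fortiori for all pairs in $X \subseteq R^n$. Applying Theorem~\ref{thm:2} to $h_j$ on $X$ at the point $t = s$ yields $h_j(\alpha s + Es) \le h_j(Es)$ for every $\alpha \in [0,1]$. Chaining this with the inequality from the previous step gives $h_j(\alpha s + Es) \le h_j(Es) \le 0$ for each $j$ and each $\alpha \in [0,1]$.

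Finally, since $\alpha s + Es \in R^n$ trivially, the point $\alpha s + Es$ satisfies every constraint of (\ref{eqn:1}), i.e. $\alpha s + Es \in X$, so it is a feasible solution, which is the claim. I do not expect any genuine obstacle here: the argument is a direct combination of Lemma~\ref{lem:1} and Theorem~\ref{thm:2}, with the main (very mild) point being the need for $E(X)\subseteq X$ to pass from $h_j(s)\le 0$ to $h_j(Es)\le 0$.
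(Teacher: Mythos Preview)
Your argument is correct, but it is more elaborate than necessary and differs from the paper's route. The paper simply invokes Definition~\ref{definition}: since $X$ is SEI w.r.t.\ $\Psi$, setting $t=s$ and $\lambda=0$ in the defining membership condition $\alpha t+Et+\lambda\Psi(\alpha s+Es,\alpha t+Et)\in X$ yields $\alpha s+Es\in X$ directly, and membership in $X$ is exactly feasibility for~(\ref{eqn:1}). No appeal to Lemma~\ref{lem:1}, Theorem~\ref{thm:2}, or the individual constraint inequalities $h_j$ is needed.

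Your approach instead unpacks feasibility as the system $h_j(\cdot)\le 0$ and verifies each inequality via Theorem~\ref{thm:2} (which itself was proved by the same $\lambda=0$ trick). This works, but it routes the argument through the QSEP property of the constraint functions rather than using the SEI property of the feasible set itself. The paper's one-line argument is shorter and uses strictly less: it does not even require the $h_j$ to be QSEP, only that $X$ be SEI, which is precisely the stated hypothesis of the lemma.
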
 
\begin{proof}
	By the Definition \ref{definition}, the proof is obvious.
\end{proof}

\begin{theorem}   
	Let $h_{0}\colon {R}^{n}\rightarrow {R}$ be a strictly QSEP function w.r.t. $\Psi$ on $ {R}^{n}$, $h_{j}\colon {R}^{n}\rightarrow  {R},~ 1\leq j\leq m$, be a QSEP functions w.r.t. $\Psi$ on
	$ {R}^{n}$ and $E(X)\subseteq X$. If $s^{*}$ is a local minimum point of (\ref{eqn:1}), then $s^{*}$ is a strictly minimum point of (\ref{eqn:1}).
\end{theorem}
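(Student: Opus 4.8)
The plan is to argue by contradiction, exploiting the strongly $E$-invex structure of the feasible set and the \emph{strict} QSEP-ness of the objective. First I would record the two standing facts: since $s^{*}$ is a local minimum of $(P)$ it is in particular feasible, $s^{*}\in X$; and since $E(X)\subseteq X$, Lemma \ref{lem:3} guarantees that $X$ is a SEI set w.r.t. $\Psi$. Now suppose, contrary to the claim, that $s^{*}$ is not a strictly minimum point. Then there is a feasible $\bar s\in X$ with $\bar s\neq s^{*}$ and $h_{0}(\bar s)\le h_{0}(s^{*})$. Using that $X$ is SEI, for every $\alpha,\lambda\in[0,1]$ the point
\[
	z_{\lambda}:=\alpha s^{*}+Es^{*}+\lambda\,\Psi\big(\alpha\bar s+E\bar s,\ \alpha s^{*}+Es^{*}\big)
\]
lies in $X$ and is therefore feasible for $(P)$; in particular $z_{0}=\alpha s^{*}+Es^{*}$, which is again feasible by Lemma \ref{lemma 4}.

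Next I would apply the strict QSEP inequality for $h_{0}$ with $s=\bar s$, $t=s^{*}$ and $\lambda\in(0,1)$: as long as $h_{0}(E\bar s)\neq h_{0}(Es^{*})$ this gives $h_{0}(z_{\lambda})<\max\{h_{0}(E\bar s),h_{0}(Es^{*})\}$. To replace the right-hand side by $h_{0}(s^{*})$ I would use Theorem \ref{thm:2} (which dominates the value of $h_{0}$ at points of the form $\alpha t+Et$ by $h_{0}(Et)$) together with $h_{0}(\bar s)\le h_{0}(s^{*})$ — and, if one adopts the usual normalization $Es^{*}=s^{*}$, also $h_{0}(Es^{*})=h_{0}(s^{*})$ — to conclude $h_{0}(z_{\lambda})<h_{0}(s^{*})$ for all $\lambda\in(0,1)$. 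Finally, since $z_{\lambda}\to z_{0}=\alpha s^{*}+Es^{*}$ as $\lambda\to 0^{+}$, and $z_{0}=s^{*}$ for the appropriate $\alpha$ (e.g. $\alpha=0$ under $Es^{*}=s^{*}$), every sufficiently small $\lambda>0$ produces a feasible point $z_{\lambda}$ inside the neighbourhood on which $s^{*}$ is a local minimum, yet with $h_{0}(z_{\lambda})<h_{0}(s^{*})$; this contradiction forces $s^{*}$ to be a strictly minimum point of $(P)$.

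The step I expect to be the real obstacle is the passage between the values of $h_{0}$ at the feasible points $s^{*},\bar s$ and the values at their $E$-images $Es^{*},E\bar s$, because the QSEP and strict-QSEP inequalities only ever carry $E$-images on their right-hand side; this is precisely what forces the appeal to Theorem \ref{thm:2} and Lemma \ref{lemma 4} (and most likely a normalization such as $Es^{*}=s^{*}$ so that the $E$-invex path $z_{\lambda}$ actually returns to $s^{*}$ as $\lambda\to0$). A second, more delicate point is the borderline case $h_{0}(\bar s)=h_{0}(s^{*})$: here one must verify that the hypothesis $h_{0}(E\bar s)\neq h_{0}(Es^{*})$ needed for the strict inequality is genuinely satisfied, and this is exactly where the word ``strictly'' in the assumption on $h_{0}$ is indispensable.
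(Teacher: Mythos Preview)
Your overall contradiction strategy is the same as the paper's, but the obstacle you yourself flag in the last paragraph is a genuine gap, and the device you propose for it does not work. Applying strict QSEP to the pair $(\bar s,s^{*})$ puts $\max\{h_{0}(E\bar s),h_{0}(Es^{*})\}$ on the right-hand side, and you then need this to be $h_{0}(s^{*})$. Theorem~\ref{thm:2} cannot deliver that: it says $h_{0}(\alpha t+Et)\le h_{0}(Et)$, i.e.\ it bounds the shifted value by the value at the $E$-image, not the other way around, so it gives no control of $h_{0}(E\bar s)$ in terms of $h_{0}(\bar s)$. The ``normalization $Es^{*}=s^{*}$'' is likewise not something you may adopt --- it is not hypothesised, and nothing forces $s^{*}$ to be a fixed point of $E$; without it your path $z_{\lambda}$ need not approach $s^{*}$ at all.

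The paper sidesteps exactly this difficulty by choosing the preimages at the outset rather than the points themselves: it writes $s^{*}=Es$ and takes the competing point $u^{*}=Eu$ also in $E(X)$, with $h_{0}(u^{*})<h_{0}(s^{*})$. The curve is then
\[
\alpha s+s^{*}+\lambda\,\Psi(\alpha u+u^{*},\,\alpha s+s^{*}),
\]
so the QSEP right-hand side is literally $\max\{h_{0}(Eu),h_{0}(Es)\}=\max\{h_{0}(u^{*}),h_{0}(s^{*})\}=h_{0}(s^{*})$, with no appeal to Theorem~\ref{thm:2} and no fixed-point assumption; and because the base point is $\alpha s+s^{*}$ one recovers $s^{*}$ at $\alpha=\lambda=0$ automatically. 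The remainder of the paper's argument is a four-case analysis (according to whether $\alpha=0$ and whether $\Psi(\alpha u+u^{*},\alpha s+s^{*})=0$) that places the constructed point inside $B_{\epsilon}(s^{*})$; your ``$z_{\lambda}\to s^{*}$'' is essentially the $\alpha=0$ sub-case of that analysis.
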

\begin{proof}
	By Lemma \ref{lemma 4}, for every $s\in X\implies s^{*}= Es\in X$. Since $s^{*}$ is a local minimum point of (\ref{eqn:1}), we get that $s^{*}\in E(X)\subseteq X$, $h_{j}(s^{*})\leq 0,~1\leq j\leq m$, and $\exists~ \epsilon>0$ s.t. 
	\begin{eqnarray*}
		h_{0}(s^{*})\leq h_{0}(t),~ \forall t\in B_{\epsilon}(s^{*})\cap X\setminus{\{s^{*}\}},
	\end{eqnarray*} where $B_{\epsilon}(s^{*})=\{t\in R^{n}\colon\Vert t-s^{*}\Vert<\epsilon\}$. Suppose that $\exists ~~u^{*}\in E(X)\subseteq X,~~u^{*}\neq s^{*}$ s.t. $h_{0}(u^{*})<h_{0}(s^{*}).$~~Since $u^{*},s^{*}\in E(X)\subseteq X$,~~ $ \exists ~~u,s\in X$ s.t. $u^{*}=Eu$, $s^{*}=Es$. For any fixed $\alpha\in[0,1]$ and $\lambda\in[0,1]$, due to Lemma \ref{lem:3}, we get that 
\begin{eqnarray*}
	\alpha s+s^{*}+\lambda\Psi(\alpha u+u^{*},\alpha s+s^{*})\in X.
\end{eqnarray*} 
And 
	\begin{equation}\label{eqn:3}
		h_{0}(\alpha s+s^{*}+\lambda\Psi(\alpha u+u^{*},\alpha s+s^{*}))<\max\{h_{0}(u^{*}),h_{0}(s^{*})\}=h_{0}(s^{*}).
	\end{equation}
	The four possible cases will be arising$\colon$\\
	
	\noindent
	$\mathbf{Case (1)\colon}$ $\Psi(u^{*},s^{*})=0~\&~\alpha=0$, then (\ref{eqn:3}) gives a contradiction.
	
	\vspace{.2cm}
	\noindent
	$\mathbf{Case (2)\colon}$ $\Psi(u^{*},s^{*})\neq 0~\&~\alpha=0$, we choose $\bar{\lambda}=\min\left\{1,\dfrac{\epsilon}{\left\Vert\Psi(u^{*},s^{*})\right\Vert}\right\}$ and, for every $\lambda\in (0,\bar{\lambda})$, we get
	\begin{align*}
		\Vert s^{*}+\lambda \Psi(u^{*},s^{*})-s^{*})\Vert&=\lambda\Vert\Psi(u^{*},s^{*})\Vert\\
		&<\bar{\lambda}\Vert\Psi(u^{*},s^{*})\Vert\\&\leq\epsilon.
	\end{align*} 
	In this case, we obtain that  
	\begin{eqnarray*}
		s^{*}+\lambda\Psi(u^{*},s^{*})\in B_{\epsilon}(s^{*})\cap X\setminus{\{s^{*}\}}
	\end{eqnarray*} 
  $\forall\lambda\in(0,\bar{\lambda})$,
and from (\ref{eqn:3}), we get
 \begin{eqnarray*}
	h_{0}(s^{*} +\lambda\Psi(u^{*},s^{*}))<h_{0}(s^{*}),
\end{eqnarray*}
	which contradicts that $s^{*}$ is a local minimum point of (\ref{eqn:1}).
	
	\vspace{.2cm}
	\noindent
	$\mathbf{Case (3)\colon}$ $\Psi(\alpha u+u^{*},\alpha s+s^{*})=0~\&~\alpha\neq0$, we choose $\bar{\alpha}=\min\left\{1,\dfrac{\epsilon}{\Vert s\Vert}\right\}$ and, for every $\lambda\in (0,\bar{\alpha})$, we get 
	\begin{align*}
		\Vert\alpha s+s^{*}-s^{*})\Vert&=\alpha\Vert s \Vert\\
		&<\bar{\alpha}\Vert s \Vert\\&\leq\epsilon.
	\end{align*}
\noindent	
In this case, we obtain that 
\begin{eqnarray*}
		\alpha s+s^{*} \in B_{\epsilon}(s^{*})\cap X\setminus{\{s^{*}\}}
	\end{eqnarray*}
  $\forall\alpha\in(0,\bar{\alpha})$, 
and from (\ref{eqn:3}), we get
 \begin{eqnarray*}
	h_{0}(\alpha s+s^{*})<h_{0}(s^{*}),
\end{eqnarray*}
	again which contradicts that $s^{*}$ is a local minimum point of (\ref{eqn:1}).
	
	\vspace{.2cm}
	\noindent
	$\mathbf{Case (4)\colon}$ $\Psi(\alpha u+u^{*},\alpha s+s^{*})\neq 0~\&~\alpha\neq0$, we choose $\bar{\alpha}=\min\left\{1,\dfrac{\epsilon_{1}}{\Vert s\Vert}\right\}$ and, for every $\alpha\in (0,\bar{\alpha})$,  we get 
	\begin{align*}
		\left\Vert\alpha s+s^{*}+\lambda \Psi(\alpha u+u^{*},\alpha s+s^{*})-s^{*})\right\Vert&\leq\alpha\Vert s\Vert+\lambda\left\Vert\Psi(\alpha u+u^{*},\alpha s+s^{*})\right\Vert \\
		&<\bar{\alpha}\Vert s\Vert+\lambda\Vert\Psi(\bar{\alpha} u+u^{*},\bar{\alpha} s+s^{*})\Vert\\
		&\leq\epsilon_{1}+\lambda\left\Vert\Psi\left(\dfrac{\epsilon_{1}}{\Vert s\Vert} u+u^{*},\dfrac{\epsilon_{1}}{\Vert s \Vert} s+s^{*}\right)\right\Vert,
	\end{align*}
	\noindent
	again  we choose $\bar{\lambda}=\min\left\{1,\dfrac{\epsilon_{2}}{\left\Vert\Psi\left(\dfrac{\epsilon_{1}}{\left\Vert s\right\Vert} u+u^{*},\dfrac{\epsilon_{1}}{\left\Vert s\right\Vert} s+s^{*}\right)\right\Vert}\right\}$ and, for any $\lambda\in (0,\bar{\lambda})$,
	\begin{align*}
		\hspace{7.3cm}	&<\epsilon_{1}+\bar{\lambda}\left\Vert\Psi\left(\dfrac{\epsilon_{1}}{\Vert s\Vert} u+u^{*},\dfrac{\epsilon_{1}}{\Vert s\Vert} s+s^{*}\right)\right\Vert.\\
		&\leq\epsilon_{1}+\epsilon_{2}\\&=\epsilon.	 
	\end{align*}
	\noindent
In this case, we obtain that 
\begin{eqnarray*}
		\alpha s+s^{*}+\lambda\Psi(\alpha u+u^{*},\alpha s+s^{*})\in B_{\epsilon}(s^{*})\cap X\setminus{\{s^{*}\}}
	\end{eqnarray*}
  $\forall\alpha\in (0,\bar{\alpha})$, $\lambda\in(0,\bar{\lambda})$, and from (\ref{eqn:3}), we get
   \begin{eqnarray*}
	h_{0}(\alpha s+s^{*}+\lambda\Psi(\alpha u+u^{*},\alpha s+s^{*}))<h_{0}(s^{*}),
\end{eqnarray*}
	again which contradicts that $s^{*}$ is a local minimum point of (\ref{eqn:1}). 
\end{proof}
\vspace{.2cm}

\section{\bf Conclusion}\label{Concl5}
The notion of quasi strongly $E$-preinvexity has been introduced, and sufficient non-trivial examples have been established in support of our definitions. The vital relationships of these functions have been discussed and several interesting properties have been explored. An application to a NLPP for QSEP functions has also been considered.  Our results extend the previously known results done by many researchers; see \cite{ Azagra,Poury1,Hussain2,Fulga,Jaiswal}. This work can be explored over Riemannian manifolds in the future.

\vspace{.2cm}
All the above defined concepts are useful for solving  problems in real life, such as mathematical programming, optimization problems, variational inequalities, and equilibrium problems; for more details, see \cite{Hussain2,Jaiswal,Jeyakumar,Noor,Yang}.


\begin{thebibliography}{23}
	
\bibitem{Azagra} D. Azagra and J. Ferrera, Inf-convolution and regularization of convex functions on Riemannian manifolds of nonpositive curvature, \emph{Rev. Mat. Complut.} \textbf{19} (2006) 323–345.

\bibitem{Abou} I. A. Abou-Tair and W.T. Sulaiman, Inequalities via convex functions, \emph{J. Math. Math. Sci.} \textbf{22} (1999) 543–546, 1999.

\bibitem{Bazaraa} M.S. Bazaraa, H.D. Sherali and C.M. Shetty, Nonlinear Programming: \emph{Theory and Algorithms.
	Hoboken,} New Jersey: John Wiley $\&$ Sons, Inc., 2006. doi: 10.1080/07408170701623070.

\bibitem{Ben} A. Ben-Israel and B. Mond, What is invexity? \emph{J. Aust. Math.} \textbf{28} (1986) 1–9, doi: 10.1017/S0334270000005142.

\bibitem{Poury1} A. Barani and  M.R. Pouryayevali, Invex sets and preinvex functions on Riemannian manifolds, \emph{J. Math. Anal.} \textbf{328} (2006) 767-779. 

\bibitem{Fulga}	C. Fulga and  V. Preda, Nonlinear programming with $E$-preinvex and local $E$-preinvex
functions, \text{Eur. J. Oper. Res.} \textbf{192} (2009) 737–743, doi:10.1016/j.ejor.2007.11.056.

\bibitem{Hussain1} A. Hussain and  A. Iqbal, Quasi strongly $E$-convex functions with applications, \emph{Nonlinear Funct. Anal. Appl.} \textbf{26} (2021) 1077–1089, doi:10.22771/nfaa.2021.26.05.16.

\bibitem{Hussain2} A. Iqbal and A. Hussain, Nonlinear programming problems for strongly $E$-invex set and strongly
$E$-preinvex functions, \emph{RAIRO-Oper. Res.} \textbf{56} (2022) 1397–1410, doi: 10.1051/ro/2022064.

\bibitem{Iqbal} A. Iqbal, S. Ali and  I. Ahmad, On geodesic $E$-convex sets, geodesic $E$-convex functions and
$E$-epigraphs, \emph{J. Optim. Theory
	Appl.} \textbf{55} (2012) 239–251, doi:10.1007/s10957-012-0052-3.

\bibitem{Iqbal1} A. Iqbal and P. Kumar, Geodesic $\epsilon$-prequasi-invex function and its applications to non-linear programming problems, \emph{Numeri. Alg. Control Optim.} (2021), doi:10.3934/naco.2021040.

\bibitem{Jaiswal} S. Jaiswal and G. Panda, Generalized differentiable $E$-invex functions and their applications
in optimization, \emph{Advances Oper. Res.} \textbf{2012} (2012) 11,
doi: 10.1155/2012/175176.

\bibitem{jabar} T. Jabarootian and M. R. Mahyarinia, Semi $E$-pseudo-invex and Semi  $E$-quasiinvex
Functions and Applications, \emph{J. Math. Ext.} \textbf{5} (2011) 1-12.

\bibitem{Jeyakumar} V. Jeyakumar, Strong and weak invexity in mathematical programming, \emph{Math. Oper. Res.} \textbf{55} (1985) 109–125.

 \bibitem{Adem1} A. Kiliçman, W. Saleh, On geodesic strongly $E$-convex sets and geodesic strongly $E$-convex functions,\emph{J. Inequal. Appl.} \textbf{297}  (2015)  2015, doi: 10.1186/s13660-015-0824-z.


\bibitem{Adem2} A. Kiliçman, W. Saleh, On properties of geodesic semilocal $E$-preinvex functions. \emph{J. Inequal. Appl.} \textbf{353} (2018) 2018, doi.org/10.1186/s13660-018-1944-z.

\bibitem{Mohan} S. R. Mohan and S. K. Neogy, On invex sets and preinvex functions, \emph{J. Math. Anal. Appl.} \textbf{189} (1995) 239–251, doi: 10.1006/jmaa.1995.1057.

\bibitem{Noor} M. A. Noor, Invex equilibrium problems, \emph{J. Math. Anal. Appl.} \textbf{302} (2005) 463-475.

\bibitem{Pini} R. Pini, Invexity and generalized convexity, optimization, \emph{Optim.} \textbf{22} (1991) 513–525, doi:10.1080/02331939108843693.

\bibitem{Soleimani} M. Soleimani-damaneh, “ $E$-convexity and its generalizations,” Int. J. Comput. Math. \textbf{88}, (2011) 3335–3349.

\bibitem{Mond} T. Weir and B. Mond, Preinvex functions in multi-objective optimization, \emph{J. Math. Anal. Appl.} \textbf{136} (1988) 29–38.

\bibitem{Youness1} E. A. Youness,  $E$-convex sets,  $E$-convex functions and  $E$-convex programming,
\emph{J. Optim. Theory Appl.} \textbf{102} (1999) 439–450,   doi:10.1023/A:1021792726715.

\bibitem{Youness2} E. A. Youness, T. Emam, Optimality criteria in  $E$-convex programming, \emph{Chaos, Solitons $\&$ Fractals} \textbf{12} (2001) 1737–1745, doi:10.1016/S0960-0779(00)00036-9.

\bibitem{Youness3} E. A. Youness and  T. Emam, Strongly  $E$-convex sets and strongly  $E$-convex functions, \emph{J. Interdiscip. Math.} \textbf{8} (2004) 107–117, doi: 10.1080/09720502.2005.10700394.

\bibitem{Youness4} E. A. Youness and  T. Emam, Semi strongly $E$-convex functions, \emph{J. Math. Stat.} \textbf{1} (2005) 51–57, doi: 10.3844/jmssp.2005.51.57.

\bibitem{Yang} X.M. Yang, Generalized convex functions and vector variational inequalities, \emph{J. Optim. Theory Appl.} \textbf{79} (1993) 563-580. 
\end{thebibliography}
\end{document}